\newtheorem{theorem}{Theorem}[section]
\newtheorem{proposition}[theorem]{Proposition}
\newtheorem{definition}[theorem]{Definition}
\newtheorem{lemma}[theorem]{Lemma}
\newtheorem{notation}[theorem]{Notation}
\newtheorem{corollary}[theorem]{Corollary}
\newtheorem{example}[theorem]{Example}
\newtheorem{remark}[theorem]{Remark}
\newtheorem{iremark}[theorem]{(Important) Remark}
\newtheorem*{theorem*} {Theorem}
\newtheorem*{corollary*} {Corollary}
\newtheorem*{kquestion*} {Known question}
\newtheorem*{question*} {Question}
\newtheorem*{remark*}{Remark}
\newtheorem*{example*}{Example}
\newtheorem*{notation*}{Notation}
\newcommand{\chos}[2]{{#1 \choose #2}}
\newcommand{\trop}[1]{\mathcal{#1}}
\newcommand{\tG}{\trop{G}}
\newcommand{\tH}{\trop{H}}
\newcommand{\tQ}{\trop{Q}}
\newcommand{\tR}{\trop{R}}
\newcommand{\To}{\longrightarrow }
\newcommand{\iso}{\overset{\sim}{\longrightarrow}}
\newcommand{\al}{\alpha}
\newcommand{\Gm}{\Gamma}
\newcommand{\Dl}{\Delta}
\newcommand{\Lm}{\Lambda}
\newcommand{\rnk}[1]{\operatorname{rk}(#1)}
\newcommand{\Cl}[1]{#1^\bullet}
    \newenvironment{proof}{
    \smallskip
    \noindent\emph{Proof.}}{\hfill\(\Box\)
    \bigskip
    } \fi
\newcommand{\ifdef}[3]{\ifthenelse{\equal{#1}{true}}{#2}{#3}}
\numberwithin{equation}{section}
\def\semiring{semiring}
\def\nook{{c-rank}}
\def\nookind{{c-independent}}
\def\prll{\, {\|} \, }
\newcommand\ul[1]{\underline{#1}}
\newcommand\st[1]{\{#1\}}
\newcommand\nk[1]{\operatorname{c-rk}(#1)}
\newcommand\DM[1]{\operatorname{DM}(#1)}
\newcommand\UC[1]{\operatorname{UC}(#1)}
\newcommand\Hs[1]{\operatorname{Hs}(#1)}
\newcommand\Lat[1]{\operatorname{Lat}({#1})}
\newcommand\clos{{\operatorname{cl}}}
\def\tGz{{\tG_0}}
\def\tTzB{\{0,1\}}
\def\tlW{\widetilde W}
\def\tlell{\widetilde \ell}
\def\tlm{\widetilde m}
\def\ltw{0.5\textwidth}
\def\beginA{\pSkip \begin{center} \begin{minipage}{\ltw}}
\def\endA{\end{minipage} \end{center} \pSkip}
\def\htvrp{\widehat{\vrp}}
\def\chvrp{\widecheck{\vrp}}
\def\ivrp{{\vrp}^{-1}}
\def\ipsi{{\psi}^{-1}}
\newcommand\pup[1]{#1^\uarr}
\newcommand\pdn[1]{#1^\darr}
\def\darr{\downarrow} \def\uarr{\uparrow}
\newcommand\cl[2]{\rwcl{#1}{\str}{#2}}
\newcommand\rwcl[3]{#1[#2,#3]}
\def\str{\, \ast \,}
\newcommand{\ds}[1]{\ {#1} \ }
\newcommand{\dls}[1]{\; {#1} \; }
\newcommand{\dss}[1]{\quad {#1} \quad }
\def\iff{\Leftrightarrow}
\def\tlL{\widetilde L}
\def\gvrp{\htvrp}
\def\lvrp{\chvrp}
\def\sji{\textsf{sji}}
\def\ji{\textsf{ji}}
\def\smi{\textsf{smi}}
\def\mi{\textsf{mi}}
\def\smiNoT{\#_{\smi \neq T}}
\def\smiNo{\#_{\smi}}
\def\sjiNoB{\#_{\sji \neq B}}
\def\sjiNo{\#_{\sji}}
\def\miNoT{\#_{\mi \neq T}}
\def\jiNoB{\#_{\ji \neq B}}
\def\ss{\sigma}
\def\cc{\kappa}
\def\Dl{\Delta}
\def\pSkip{\vskip 1.5mm \noindent}
\def\ltw{0.7\textwidth}
\newcommand\HH{\mathscr{H}}
\newcommand\hgt[1]{ \operatorname{ht}( #1)}
\def\iso{ \operatorname{iso}}
\def\spec{\operatorname{spec}}
\def\LAT{\operatorname{LAT}}
\def\SLat{\operatorname{SLAT}}
\def\FLat{\operatorname{FLAT}}
\def\FSLat{\operatorname{FSLAT}}
\def\ICM{\operatorname{ICM}}
\def\vrp{\varphi}
\def\dlL{L ^{*}}
\def\dlM{M ^{*}}
\def\Pos{P}
\newcommand\rvs[1]{{#1}_{\operatorname{rvs}}}
\newcommand\cmp[1]{{#1}^{\operatorname{c}}}
\newcommand\trn[1]{{#1}^{\operatorname{t}}}
\newcommand\boxtext[1]{\pSkip \qquad \qquad \qquad \framebox{\parbox{\ltw}{#1}}\pSkip}
\def\Pow{\operatorname{Pw}}
\def\iff{\Leftrightarrow}
\def\imp{\Rightarrow}
\def\adj{\operatorname{adj}}
\newcommand\adjsup[1]{#1_{\adj-\sup}}
\def\H{\HH}
\def\Cl{\operatorname{Col}}
\def\Rw{\operatorname{Row}}
\def\sm{\setminus}
\def\1{1^\nu}
\def\0{0^\nu}
\newcommand{\etype}[1]{\renewcommand{\labelenumi}{(#1{enumi})}}
\def\eroman{\etype{\roman}}
\def\ealph{\etype{\alph}}
\def\({\left(}
\def\){\right)}
\def\tGz{{\tG_0}}
\def\onto{\twoheadrightarrow}
\def\bool{\mathbb B}
\def\sbool{{\mathbb{SB}}}
\newcommand{\per}[1]{\operatorname{per}({#1})}
\begin{document}

%******************************* title ***********************************

\title[C-independence and c-rank of Posets and Lattices]
{C-independence and c-rank of Posets and Lattices}

%******************************* authors *********************************

\author{Zur Izhakian}

\address{School of Mathematical Sciences, Tel Aviv
    University, Ramat Aviv,  Tel Aviv 69978, Israel \vskip 1pt
Department of Mathematics, Bar-Ilan University, Ramat-Gan 52900,
Israel} \email{zzur@post.tau.ac.il;zzur@math.biu.ac.il}

\thanks{The research of the first author has  been  supported  by the
Israel Science Foundation (grant No.  448/09) and  by the
\textit{Oberwolfach Leibniz Fellows Programme (OWLF)},
Mathematisches Forschungsinstitut Oberwolfach, Germany.}

\thanks{The second author gratefully acknowledges the hospitality of the
{Mathematisches Forschungsinstut Oberwolfach} during a visit to
Oberwolfach.}

\author{John Rhodes}
\address{Department of Mathematics, University of California, Berkeley,
970 Evans Hall \#3840, Berkeley, CA 94720-3840 USA }
\email{rhodes@math.berkeley.edu;blvdbastille@aol.com}

%******************************* AMS classification ***********************
\subjclass[2010]{Primary 52B40, 05B35, 03G05, 06G75, 55U10;
Secondary 16Y60, 20M30.}

%******************************* date *************************************
\date{\today }

%******************************* keywords *********************************

\keywords{Boolean and superboolean algebra, Representations,
Structured sets, Posets, Lattices, Orders, Rank and dependence.}

%\thanks{\noindent \underline{\hskip 3cm } \\ File name: \jobname}

%******************************* abstract *********************************

\begin{abstract}
Continuing with the authors concept (and results) of defining
independence for columns of a boolean and superboolean matrix, we
apply this theory to finite lattices and finite posets,
introducing boolean and superboolean matrix representations for
these objects. These representations yield the new concept of
c-independent subsets of lattices and posets, for which the notion
of c-rank is determined as the cardinality of the largest
c-independent subset. We characterize this c-rank and show that
c-independent subsets have a very natural interpretation in term
of the maximal chains of the Hasse diagram and the associated
partitions of the lattice. This realization has direct important
connections with chamber systems.
\end{abstract}

\maketitle

%******************************* remarks *********************************
%\setcounter{tocdepth}{1}
% \setcounter{tocdepth}{1} {\small \tableofcontents}

%******************************* section *********************************
\section*{Introduction}

The concept of boolean and superboolean representations had been
introduced first in \cite{IJmat} for finite hereditary
collections, and later was studied in depth for matroids
\cite{IJmatII}. In the present paper we broaden this concept to
finite partially ordered sets (written posets, as usual) and
mainly to finite lattices. Furthermore, we show that the same
representation ideas are naturally applicable to structured sets,
either finite or infinite (cf.~ \S\ref{sec:2}).

Our representations are performed by using matrices with
coefficients  over the superboolean semiring~\cite{IJmat}, a
certain instance of a finite supertropical semiring
\cite{zur05TropicalAlgebra,IzhakianRowen2007SuperTropical}. The
algebra of these matrices provides a proper notion of linear
independence \cite{IzhakianTropicalRank}, without the use of
negation, which is absent in the ``weak'' structure of semirings.
This notion of independence, determined for lattices and posets
via their representations, is at the heart of our theory and leads
neutrally to the introduction of the  \nook, defined to be the
cardinality of the largest independent subset.

When dealing with lattices, independent subsets have a fundamental
correspondence with the maximal chains of the lattice. In
particular, we prove that the \nook \ of a finite lattice equals
its height (Theorem~ \ref{thm:4.9}). Introducing the idea of
``pushing chains'' (cf. Definition \ref{def:push}), we show that
this pushing operation preserves lattice independent subsets.
%, and moreover
% every independent lattice subset.

 % The \nook \ of a lattice provides us with a lower bound on the
 % number of irreducible elements of the lattice. This bound is very
% useful the study of sup-maps of lattices; these maps eventually
% lead to a characterization of the distributivity property for
 % lattices.

The perspective of the Hasse diagram, together with that of
Dedekind-MacNeille completion, leads us to partition of lattices
-- a novel idea -- which plays a major role in our theory. Another
important notion in our theory is that of  {``partial cross
sections''}, which provides  a characterization of properties of a
subset with respect to a certain partition. All these enable us to
determine the fundamental connection between independence of
lattice subsets (determined by its matrix representation) and the
actual lattice structure (Theorem \ref{thm:pcs}).

Finally, assisted by boolean modules and their corresponding
lattices, we apply our representation techniques to finite
hereditary collections (also known as finite abstract simplical
complexes), yielding additional connections between the boolean
representation of these objects and finite lattices (Theorems~
\ref{thm:5.3} and \ref{thm:5.4}).

\begin{notation*}\label{nott:set}
 In this paper, for simplicity, we use the
following notation: Given a subset $X \subseteq E$, and elements
$x \in X$ and $p \in E$, we write $X- x$ and $X +y$ for $X \sm \{
x\}$ and $X \cup \{ y \}$, respectively; accordingly we write
$X-x+y$ for
 $(X\sm \{x\}) \cup \{ y\}$.
%
% Abusing the terminology, we sometimes say
% that an element $p \in E$ is independent iff $\{ p\}$ is
%independent, i.e., $\{ p\} \in \tH$.
\end{notation*}

%******************************* section *********************************
\section{Boolean and superboolean algebra }

The very well known  \textbf{boolean \semiring} is the two element
idempotent \semiring \ $\bool := (\{ 0,1\}, + , \cdot \;),$ whose
addition and multiplication are given respectively by the
following tables:
$$ \begin{array}{l|ll}
   + & 0 & 1  \\ \hline
     0 & 0 & 1  \\
     1 & 1 & 1 \\
   \end{array} \qquad \text{and} \qquad
\begin{array}{l|ll}
   \cdot & 0 & 1\\ \hline
     0 & 0 & 0  \\
     1 &  0 & 1  \\
   \end{array} \ .
   $$

The \textbf{superboolean \semiring}  $\sbool : = (\{ 1, 0, 1^\nu
\},  + , \cdot
 \;)$  is three element supertropical \semiring \ \cite{IzhakianRowen2007SuperTropical}, a
``cover'' of the boolean \semiring, endowed with the two binary
operations:
$$ \begin{array}{l|lll}
   + & 0 & 1 & 1^\nu \\ \hline
     0 & 0 & 1 & 1^\nu \\
     1 & 1 & 1^\nu & 1^\nu \\
     1^\nu &1^\nu & 1^\nu & 1^\nu \\
   \end{array} \qquad
\begin{array}{l|lll}
   \cdot & 0 & 1 & 1^\nu \\ \hline
     0 & 0 & 0 & 0 \\
     1 &  0 & 1 & 1^\nu \\
     1^\nu & 0 & 1^\nu & 1^\nu \\
   \end{array}
   $$
 addition and multiplication, respectively.
This semiring is totally  ordered by $ 1^\nu \
> \ 1 \
> \ 0 .$
Note that $\sbool$ is \textbf{not} an idempotent \semiring,  since
$1 +1 = \1$, and thus  $\bool$ is \textbf{not} a subsemiring of
$\sbool$. The element~$\1$ is called the \textbf{ghost} element,
where $\tGz := \{0, \1\}$ is the  \textbf{ghost ideal}\footnote{In
the supertropical setting, the elements of the complement of
$\tGz$ are called \textbf{tangibles}.}  of $\sbool$.

%Two elements $a$ and $b$ of $\sbool$ are
%$\nu$-\textbf{equivalent}, written $a \nucong b$, if $a = b$ or
%$a, b \in \{1,  \1 \}$.

\subsection{Boolean matrices}\label{ssec:boolMat}

The semiring $M_n(\sbool)$ of $n \times n$ superboolean matrices
with entries in ~$\sbool$ is defined in the standard way, where
addition and multiplication are induced from the operations of
$\sbool$ as in the familiar matrix construction. The unit element
$I$ of
 $M_n(\sbool)$, is the matrix with $1$ on the main diagonal and
whose off-diagonal entries are all $0$.

A typical matrix is often denoted as $A =(a_{i,j})$, and  the zero
matrix is  written as $(0)$. A matrix is said to be a
\textbf{ghost matrix} if all of its entries are in $\tGz$.  A
boolean matrix is a matrix with coefficients in $\tTzB$, the
subset of boolean matrices is denoted by $M_n(\bool).$
%(Thus, boolean
%matrices are considered as tangible matrices.)

The following discussion is presented for superboolean matrices,
where boolean matrices are considered as superboolean matrices
with entries in $\tTzB$.
 Note that boolean
matrices $M_n (\bool)$ are \textbf{not} a sub-semiring of the
semiring of superboolean matrices $M_n(\sbool)$.

In the standard way, for any matrix $A \in M_n(\sbool)$, we define
the \textbf{permanent} of  $A = (a_{i,j})$ as:
\begin{equation}\label{eq:det1}
\per{A} := \sum _{\pi \in S_n}a_{\pi (1),1} \cdots a_{\pi
(n),n}\end{equation}
where $S_n$ stands for the group of permutations  of
$\{1,\dots,n\}$. Note that the permanent of a boolean matrix can
be $\1$.           We say that a matrix $A$ is
\textbf{nonsingular} if $\per{A} = 1$, otherwise $A$ is said to be
\textbf{singular}.

\begin{lemma}[{\cite[Lemma 3.2]{IJmat}}]\label{lem:2.1.f}
A matrix $ A  \in M_n(\sbool)$ is nonsingular iff by independently
permuting columns and rows it has the triangular form
\begin{equation}\label{eq:trgform}
  A' :=  \(\begin{array}{cccc}
  1 & 0 &\cdots & 0 \\
  * & \ddots &  \ddots &\vdots\\
\vdots & \ddots & 1& 0\\
  * &  \cdots &*  & 1\\
   \end{array}\),
\end{equation}
 with all diagonal entries $1$, all entries above the diagonal are
 $0$, and the entries below the diagonal belong to  $\{1, \1, 0 \}$.

  Such
 reordering of $A$ is equivalent to multiplying the matrix $A$ by two
 permutation matrices $\Pi_1$ and~ $\Pi_2$ on the right and on the
 left, respectively, i.e., $A' := \Pi_1 A \Pi_2$.
\end{lemma}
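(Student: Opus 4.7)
The plan is to translate the equation $\per{A} = 1$ into a purely combinatorial statement about the support pattern of $A$, and then run a topological-sort argument on a suitable digraph.

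First I would decode what $\per{A} = 1$ means. Each permutation $\sigma \in S_n$ contributes a product $a_{\sigma(1),1}\cdots a_{\sigma(n),n}$, which in $\sbool$ takes one of three values: it is $0$ if some factor is $0$; it is $\1$ if no factor is $0$ but some factor equals $\1$; and it is $1$ otherwise. Because $1+1=\1$ and $x + \1 = \1$ for every $x \neq 0$, the sum reduces to $1$ only when exactly one permutation $\pi$ has all entries $a_{\pi(i),i}$ nonzero, and moreover all of these must equal $1$ (not $\1$).

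For the $(\Leftarrow)$ direction, if $A$ already has the form (\ref{eq:trgform}) then the identity permutation contributes $1$, while any non-identity $\sigma$ must satisfy $\sigma(j) < j$ for some $j$ (a non-identity bijection of a finite set cannot satisfy $\sigma(j) \geq j$ everywhere), putting $a_{\sigma(j),j}$ strictly above the diagonal, hence $0$, and killing the product. Summing in $\sbool$ therefore gives $\per{A} = 1$.

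For the $(\Rightarrow)$ direction I would proceed in two permutation steps. Using the unique $\pi$ produced by the decoding, apply the row permutation $\pi^{-1}$ to obtain a matrix $B$ whose main diagonal consists entirely of $1$'s and for which every non-identity $\sigma$ still has some $b_{\sigma(j),j} = 0$. Build the digraph $G$ on $\{1,\ldots,n\}$ with an arc $i \to j$ (for $i \neq j$) whenever $b_{i,j} \neq 0$. A directed cycle $j_1 \to j_2 \to \cdots \to j_k \to j_1$ would yield a nontrivial cyclic permutation $\sigma$ sending $j_{m+1} \mapsto j_m$ (mod $k$) and fixing the remaining indices, with every $b_{\sigma(j),j}$ nonzero, contradicting the decoding. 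Hence $G$ is acyclic and admits a topological ordering; relabelling by this ordering is a simultaneous row/column permutation that preserves the all-$1$ diagonal and pushes every nonzero off-diagonal entry strictly below the diagonal, with values automatically constrained to $\{0,1,\1\}$ by $\sbool$ itself. Composing the two row permutations produces $\Pi_1$, while the simultaneous column step supplies $\Pi_2$.

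The one delicate point is bookkeeping: tracking the index convention $a_{\sigma(j),j}$ carefully enough that permutation cycles of $\sigma$ correspond to \emph{directed} cycles in $G$ with the correct orientation. Beyond that, the argument is a restatement of the fact that a finite digraph is a DAG if and only if it admits a topological sort.
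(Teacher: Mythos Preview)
The paper does not supply a proof of this lemma; it is quoted from \cite[Lemma~3.2]{IJmat} and used as a black box. So there is no in-paper argument to compare against.

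Your argument is correct in substance. The decoding of $\per{A}=1$ as ``exactly one permutation $\pi$ hits only nonzero entries, and all those entries equal $1$'' is right, and this is the crux. The backward direction is clean. For the forward direction, the two-stage permutation (first $\pi^{-1}$ on rows to force the diagonal to be all $1$'s, then a simultaneous conjugation by a topological sort of the off-diagonal support digraph) is the standard and efficient way to produce the triangular form; the cycle argument correctly rules out any second nonzero permutation term.

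The one thing to tidy up is exactly the bookkeeping you flag: with your convention (arc $i\to j$ when $b_{i,j}\neq 0$), a topological order places $i$ \emph{before} $j$, which puts the nonzero off-diagonal entries \emph{above} the diagonal rather than below. Either reverse the arc direction in the definition of $G$, or use the reverse of the topological order; then the triangular form matches \eqref{eq:trgform}. This is cosmetic, not a gap.
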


Let $A$ be an $m \times n$ superboolean matrix. We say that a $k
\times \ell$ matrix $B$, with $k \leq m$ and $\ell \leq n$,  is a
\textbf{submatrix} of~$A$ if $B$ can be obtained by deleting rows
and columns of~$A$. In particular, a \textbf{row} of a matrix $A$
is an $1 \times n$ submatrix of $A$, where a \textbf{subrow} of
$A$ is an $1 \times \ell$ submatrix of $A$, with  $\ell \leq n$. A
\textbf{minor} is a submatrix obtained by deleting exactly one row
and one column of a square matrix.

\begin{definition}[{\cite[Definition 3.3]{IJmat}}]\label{def:marker}
A \textbf{marker} $\rho$ in a matrix is a subrow having a single
$1$-entry and all whose other entries are $0$; the length of
$\rho$ is the number of its entries. A marker of length $k$ is
written $k$-marker.
\end{definition}
For example the nonsingular matrix $A'$ in \eqref{eq:trgform} has
a $k$-marker for each $k = 1,\dots, n$, appearing in this order
from bottom to top. (Note that in general markers need not be
disjoint.)

\begin{corollary}[{\cite[Corollary 3.4]{IJmat}}]\label{cor:nonsing}
If a matrix $ A  \in M_n(\sbool)$ is a nonsingular matrix, then
$A$ has an
$n$-marker. %\mnote{B13 to prove}
\end{corollary}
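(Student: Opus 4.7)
The plan is to derive the corollary directly from the structural description provided by Lemma~\ref{lem:2.1.f}. Since $A$ is nonsingular, there exist permutation matrices $\Pi_1$ and $\Pi_2$ such that $A' := \Pi_1 A \Pi_2$ has the triangular form \eqref{eq:trgform}: diagonal entries all equal to $1$, entries above the diagonal all equal to $0$, and entries below the diagonal in $\{0,1,\1\}$. I will extract an $n$-marker of $A$ from this normal form.

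The key observation is that the first row of $A'$ has the shape $(1,0,\dots,0)$: by the triangular form the $(1,1)$-entry is $1$ and every other entry in row~$1$ lies above the diagonal, hence is $0$. This single row is a $1 \times n$ subrow of $A'$ with a unique $1$-entry and all remaining entries equal to $0$, so by Definition~\ref{def:marker} it is an $n$-marker of $A'$.

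It remains to transport this marker back to~$A$. The two permutation matrices $\Pi_1$ and $\Pi_2$ merely reorder the rows and the columns of $A$; they neither create nor destroy any nonzero entry, nor change its value. In particular, being a subrow consisting of a single $1$ and otherwise $0$'s is preserved (the unique $1$-entry only changes its column index). Hence the row of $A$ that $\Pi_1$ sends to the top row of $A'$ is itself a $1 \times n$ subrow of $A$ whose entries comprise a single $1$ and $n-1$ zeros, i.e.\ an $n$-marker of $A$.

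No substantial obstacle arises here; the entire content of the corollary is already packaged in the triangular form of Lemma~\ref{lem:2.1.f}, and the only point requiring care is verifying that the marker property is invariant under the row/column permutations used to produce $A'$ from $A$.
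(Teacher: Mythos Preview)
Your proof is correct and follows exactly the argument the paper intends: the remark immediately preceding the corollary observes that the triangular form \eqref{eq:trgform} has a $k$-marker for each $k=1,\dots,n$ ``from bottom to top,'' so in particular the top row is an $n$-marker, and your point that row/column permutations preserve the marker property is the only remaining step.
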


\begin{definition}[{\cite[Definition 1.2]{IzhakianTropicalRank}}]\label{def:tropicDep}
A collection of vectors  $v_1,\dots,v_m \in \sbool^{(n)}$ is said
to be \textbf{dependent}
 if there exist $\al_1,\dots,\al_m \in \tTzB$,  not all of them $0$,
 for which
$$
   \al_1 v_1 +  \cdots + \al_m  v_m \in
  \tGz^{(n)}.
$$
Otherwise the vectors are said to be \textbf{independent}.
\end{definition}

The \textbf{column rank} of a superboolean matrix $A$ is defined
to be the maximal number of independent columns of $A$. The
\textbf{row rank} is defined similarly with respect to the rows of
$A$.

%\begin{corollary}[{\cite[Corollary 3.7]{IzhakianTropicalRank}}]\label{cor:nRank}
%A matrix in $M_n(\sbool)$ is of rank $n$ iff it is nonsingular.
%\end{corollary}

\begin{theorem}[{\cite[Theorem
3.11]{IzhakianTropicalRank}}]\label{thm:rnkSing} For any
supertropical matrix $A$ the row rank and the column rank are the
same, and this rank is equal to the size of the maximal
nonsingular submatrix of $A$.
\end{theorem}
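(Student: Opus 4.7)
The plan is to show that the column rank of $A$ equals the size of a largest nonsingular submatrix of $A$, and then transpose. Since the permanent defined in \eqref{eq:det1} is invariant under transposition (it is a symmetric sum over $S_n$), transposing shows that the row rank of $A$ also equals this same number, so row rank and column rank coincide. Thus the theorem reduces to a single biconditional: a set of $k$ columns $v_{j_1},\dots,v_{j_k}$ is independent if and only if there exist $k$ row indices $i_1,\dots,i_k$ such that $A[i_1,\dots,i_k;\,j_1,\dots,j_k]$ is nonsingular.

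For the ``if'' direction I would use Lemma~\ref{lem:2.1.f} together with Corollary~\ref{cor:nonsing}. Assume a nonsingular $k \times k$ submatrix $B$ lies in columns $v_{j_1},\dots,v_{j_k}$. By Lemma~\ref{lem:2.1.f} there exist permutation matrices $\Pi_1,\Pi_2$ putting $B$ into the triangular form \eqref{eq:trgform}, and by Corollary~\ref{cor:nonsing} this form exhibits a $k$-marker, a $(k{-}1)$-marker, and so on down to a $1$-marker. Suppose for contradiction that $\alpha_1 v_{j_1} + \cdots + \alpha_k v_{j_k} \in \tGz^{(n)}$ with $\alpha_s \in \tTzB$ not all zero. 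Restrict to the rows of $B$; the $1$-marker row forces the corresponding coefficient to lie in $\tGz$, and since $\alpha_s \in \{0,1\}$ this means $\alpha_s = 0$. Peeling off one coefficient at a time using the $2$-marker, $3$-marker, $\dots$, $k$-marker gives all $\alpha_s = 0$, contradicting the assumption.

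For the ``only if'' direction I would induct on $k$. The case $k=1$ is immediate: a single column is independent iff it contains a tangible $1$, which is itself a $1\times 1$ nonsingular submatrix. For the inductive step, assume $v_{j_1},\dots,v_{j_k}$ are independent. Then $v_{j_1},\dots,v_{j_{k-1}}$ are independent as well, so by induction there is a nonsingular $(k{-}1)\times(k{-}1)$ submatrix $B'$ in those columns using some row indices $i_1,\dots,i_{k-1}$. I would argue that some row $i_k$ can be adjoined so that the resulting $k \times k$ submatrix with column $v_{j_k}$ is nonsingular. Place $B'$ in triangular form via Lemma~\ref{lem:2.1.f}, and examine the entries of $v_{j_k}$ outside rows $i_1,\dots,i_{k-1}$: if none of them, together with the marker structure of $B'$, can be used to produce a $k$-marker, one can cook up coefficients $\alpha_s \in \{0,1\}$ forcing $\sum \alpha_s v_{j_s} \in \tGz^{(n)}$ (the off-diagonal ghosts absorb everything), contradicting independence. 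This yields a new row $i_k$ completing $B'$ to a nonsingular $k\times k$ submatrix.

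The main obstacle is precisely this combinatorial extension step: showing that failing to find such a row $i_k$ forces a non-trivial ghost combination among $v_{j_1},\dots,v_{j_k}$. This requires exploiting the fact that the scalars $\alpha_s$ are restricted to $\tTzB$ (not all of $\sbool$), together with the rigid marker geometry of nonsingular matrices from Lemma~\ref{lem:2.1.f}, to convert the absence of an extending marker into an explicit dependency certificate. Once this is in place, induction and transposition immediately yield the full statement.
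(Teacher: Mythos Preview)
The paper does not give its own proof of this theorem: it is quoted verbatim as \cite[Theorem 3.11]{IzhakianTropicalRank} and used as a black box, so there is nothing in the present paper to compare your argument against.

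On the merits of your sketch itself, the ``if'' direction is fine and is exactly the marker-peeling argument one expects from Lemma~\ref{lem:2.1.f}. The ``only if'' direction, however, has a real gap that you yourself flag but do not close. Your induction fixes a particular nonsingular $(k{-}1)\times(k{-}1)$ submatrix $B'$ in columns $j_1,\dots,j_{k-1}$ and then tries to adjoin a single row $i_k$ together with column $j_k$. There is no reason this specific $B'$ admits such an extension: it is perfectly possible that every nonsingular $k\times k$ submatrix in columns $j_1,\dots,j_k$ uses a row set disjoint from, or only partially overlapping, $\{i_1,\dots,i_{k-1}\}$. So the failure to extend $B'$ need not produce a ghost dependency among $v_{j_1},\dots,v_{j_k}$; it may only say you picked the wrong witness for the first $k{-}1$ columns. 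To make the induction go through you would need either a stronger statement (e.g.\ that \emph{every} nonsingular $(k{-}1)$-witness extends, which is false in general) or a different organization that searches over all row sets simultaneously rather than committing to $B'$ in advance. The original proof in \cite{IzhakianTropicalRank} handles this by a more global argument; your outline, as written, does not.
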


%\begin{theorem}[{\cite[Theorem 3.6]{IzhakianRowen2009TropicalRank}}]\label{thm:base} Let $A =
% (a_{i,j})$ be an $m \times n $ matrix with $n \geq m$, and suppose
% that each of whose $m \times m$ submatrices is singular. Then the
% rows $v_1, \dots, v_m$ of $A$ are dependent.
% \end{theorem}

\begin{definition} Let  $A = (a_{i,j})$ be a superboolean matrix.
The \textbf{complement}  $\cmp{A} := (\cmp{a_{i,j}})$ of $A$ is
defined by the role $ \cmp{a_{i,j}} = 1 \Leftrightarrow  a_{i,j} =
0$, and  $ \cmp{a_{i,j}} = \1 $ for every ghost  entry $ {a_{i,j}}
= \1$. The \textbf{transpose} $\trn{A} = (\trn{a_{i,j}})$ of $A$
is given by $ \trn{a_{i,j}}   = a_{j,i} .$
\end{definition}

Then we can conclude the following:
\begin{corollary}\label{cor:nRankTran}
The rank of a superboolean matrix is invariant under
\begin{enumerate}\eroman

   \item permuting of rows (columns); \pSkip

   \item deletion of a row (column) whose entries are all in $\tGz$; \pSkip

   \item deletion of a repeated  row or column; \pSkip

   \item  transposition, i.e., $\rnk{A} = \rnk{\trn{A}}$.
\end{enumerate}
\end{corollary}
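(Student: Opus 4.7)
The plan is to derive all four claims from Theorem~\ref{thm:rnkSing}, which identifies the rank of a superboolean matrix with the largest size of a nonsingular square submatrix. Each operation then amounts to checking that this maximal size is preserved. Claim~(iv) is immediate: Theorem~\ref{thm:rnkSing} shows that row and column rank coincide, and the row rank of $A$ is by definition the column rank of~$\trn{A}$. Claim~(i) is also direct, since permuting rows or columns of~$A$ only relabels the positions of its submatrices, and Lemma~\ref{lem:2.1.f} already allows arbitrary row/column reorderings inside a submatrix when testing nonsingularity.

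For~(ii), I would show that any square submatrix $B$ containing an all-ghost row is singular: every term $a_{\pi(1),1}\cdots a_{\pi(n),n}$ of $\per{B}$ contains one factor from that row, which lies in~$\tGz$, and since $\tGz$ is an ideal of~$\sbool$ the whole product lies in~$\tGz$; hence $\per{B}\in\tGz$, so $\per{B}\neq1$. Thus every nonsingular submatrix of~$A$ avoids the ghost row and therefore appears intact in the matrix obtained by deleting that row, so the maximal nonsingular size is unchanged. The reverse inequality is immediate, and the ghost-column case is identical (or follows from~(iv)).

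The main content is~(iii), where the key step is to show that a square submatrix $B$ with two equal rows is singular; the repeated-column case then follows from~(iv) or by a symmetric argument. Given rows $i,j$ of~$B$ with $a_{i,k}=a_{j,k}$ for all $k$, I would consider the involution $\sigma\colon S_n\to S_n$ defined by $\sigma(\pi)=(i\,j)\circ\pi$. Since $i\neq j$, this is fixed-point-free and partitions $S_n$ into pairs $\{\pi,\sigma(\pi)\}$. Whenever $\pi(k)\in\{i,j\}$ the equality $a_{i,k}=a_{j,k}$ gives $a_{\sigma(\pi)(k),k}=a_{\pi(k),k}$, and for other $k$ the entries agree trivially; hence the two permutations in each pair contribute the same product $p$ to $\per{B}$. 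A direct check shows $p+p\in\tGz$ for each value $p\in\{0,1,\1\}$ (indeed $0+0=0$, $1+1=\1$, $\1+\1=\1$), and since $\tGz$ is closed under addition, summing over all pairs yields $\per{B}\in\tGz$. Thus $B$ is singular, and deleting a duplicate row removes no nonsingular submatrix from~$A$. The main obstacle is precisely this involution argument; once it is in place, the remaining items collapse to routine consequences of the permanent formula together with the ideal property of~$\tGz$.
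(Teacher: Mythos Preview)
Your proposal is correct and follows the same route as the paper, which simply records the corollary as ``Immediate by Theorem~\ref{thm:rnkSing}.'' You have supplied the details the paper leaves implicit---in particular the ideal argument for~(ii) and the pairing involution $\pi\mapsto(i\,j)\circ\pi$ for~(iii)---and these are exactly the verifications needed to justify the word ``immediate.''
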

\begin{proof}
Immediate by Theorem \ref{thm:rnkSing}.
\end{proof}

\begin{proposition} Transposition and complement commute, i.e.,
$\cmp{(\trn{A})} = \trn{(\cmp {A})}$  for any superboolean matrix
$A \in M_n(\sbool)$.
\end{proposition}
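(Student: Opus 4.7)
The plan is a direct entry-by-entry verification, since both complement and transposition are defined pointwise on entries. Write $A = (a_{i,j})$, and let me denote by $c\colon \sbool \to \sbool$ the scalar complement rule implicit in the definition, namely $c(0) = 1$, $c(1) = 0$, and $c(\1) = \1$. The statement $\cmp{(\trn{A})} = \trn{(\cmp{A})}$ then reduces to a claim about how this scalar rule interacts with the swap $i \leftrightarrow j$ of indices.

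First I would compute the $(i,j)$-entry of the left-hand side. The transpose $\trn{A}$ has $(i,j)$-entry equal to $a_{j,i}$ by the definition of transposition, and applying the complement then gives $(i,j)$-entry $c(a_{j,i})$.

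Next I would compute the $(i,j)$-entry of the right-hand side. The complement $\cmp{A}$ has $(i,j)$-entry $c(a_{i,j})$, and transposing swaps the indices, so the $(i,j)$-entry of $\trn{(\cmp{A})}$ is $c(a_{j,i})$.

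Since the two matrices agree in every entry, they are equal. There is no real obstacle: the only point worth emphasizing is that the scalar complement rule is a genuine function $\sbool \to \sbool$ (handling the tangible entries $0$, $1$ and the ghost entry $\1$ uniformly), so that the two operations ``apply $c$ entrywise'' and ``swap indices'' commute trivially as operations on the underlying $\sbool$-valued matrices.
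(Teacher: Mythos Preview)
Your proof is correct and follows essentially the same approach as the paper: a direct entrywise verification that both sides have $(i,j)$-entry equal to the complement of $a_{j,i}$. The paper compresses this into a single line, but the content is identical.
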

\begin{proof} Straightforward: $\cmp{(\trn{a_{i,j}})} = \cmp{({a_{j,i}})} =  \trn{(\cmp{{a_{i,j}}})}.$
\end{proof}

\begin{notation}\label{nott}
Given a matrix $A$ and a subset $Y \subseteq \Cl(A)$ of columns of
$A$, we write $\cl{A}{Y}$ for the submatrix of $A$ having the
columns $Y$. Sometimes we refer to $\Cl(A)$ as a collection of
vectors, but no confusion should arise. Given also a subset $X
\subseteq \Rw(A)$ of rows of $A$, we define $\rwcl{A}{X}{Y}$ to be
the submatrix of $A$ having the intersection of columns $Y$ and
the rows $X$, often also referred to as a collection of
sub-vectors.
\end{notation}%\mnote{B15 row-> col}

%
%\subsection{Finite $\bool$-modules}
%
%Let $\oplus$ be the categorical direct sum in the category of
%boolean modules. Thus, for boolean modules $M_1$ and $M_2$, $M_1
%\oplus M_2$ has elements $ (m_1, m_2) \in M_1 \times M_2$ and
%addition $(m_1, m_2) + (m'_1, m'_2) = (m_1+ m'_1, m_2 + m'_2)$,
%where the zero element is $(0,0)$.
%
%
%\begin{proposition} \label{prop:4.7.a} $\rnk{M_1 + M_2} = \rnk{M_1} +
%\rnk{M_2}$.
%\end{proposition}
%
%
%\begin{proof}
%Assume $\rnk{M_1} = m$ and $\rnk{M_2} = n$. TODO
%\end{proof}
%

\section{Abstract setting}\label{sec:2}

\subsection{Structured sets}
Let $X$ be a nonempty finite set, i.e., $|X| = n$, and let $\tR$
be a binary relation defined on  the elements of $X$, written $x_i
\dls \tR  x_j$;  thus $\tR$ determines a structure on $X$. We
denote such a pair by $(X, \tR),$ and call it a \textbf{structured
set} (over $X$).

% A map $\phi : (X, \tR) \to (X', \tR')$ that satisfies $x_i \dls
% \tR x_j \Leftrightarrow \phi(x_i) \dls{\tR'}  \phi(x_j) $ is
% termed  \textbf{structured set homomorphism}; $\phi$ is an
% isomorphism if it is bijective.

Given a structured set $(X, \tR)$,   with $X := \{ x_1, \dots, x_n
\} $ a finite set of elements, we associate $X$ with the  $n
\times n $ boolean matrix $A(X) := (a_{i,j})$, called
\textbf{structure matrix}, defined as
\begin{equation}\label{eq:matRelation}
 a_{i,j} := \left \{
\begin{array}{ll}
  1 & \text{if }  x_i \ds \tR x_j, \\[1mm]
  0 & \text{otherwise}. \\
\end{array}
\right.
\end{equation}
We write $(X,A)$ for the set $X$ and together with its structure
matrix $A = A(X)$ defined above, and  call this pair again a
\textbf{structured set}.

Having the above construction it is clear that the relation $\tR$
is fully recorded by the matrix $A$ and vise versa. Therefore, we
identify the relation $\tR$ on $X$ with the matrix $A := A(X)$.

\subsection{Independence}

%Along this paper we realize a matrix as a semi-module,  generated
%by the matrix columns, and thus always work column-wise. Dually,
%one can similarly work with matrix rows.

we open with the key definition of our further development:

\begin{definition}\label{def:nook}Given a structured set $X:=(X, A)$  we define the \textbf{\nook} of $X$ as
$$ \nk{X} := \rnk{\cmp{A}}, \qquad  \cmp{A} :
= \cmp{(A(X))}.$$
\end{definition}

Given a structured set $X := (X,A)$, consider the matrix $
\cmp{A}$, written also as $\cmp{A} := \cmp{A}(X)$.  We say that a
subset $W \subseteq X$ is \textbf{\nookind} if the columns
$\cl{\cmp{A}}{W}$ of $\cmp{A}$ corresponding to $W$ are
independent in the sense of Definition \ref{def:tropicDep}. When
$|W| = k$, these columns contains a $k \times k $ nonsingular
submatrix $\rwcl{\cmp{A}}{U}{W}$ with $U \subseteq X$ and  $|U| =
k$ (cf. Theorem \ref{thm:rnkSing}), which we call a
\textbf{witness} of $W$ (in $\cmp{A}$). Abusing terminology, we
also say that $U$ is a witness of $W$ in the set $X$. Permuting
independently the columns of a witness, it has the triangular Form
\eqref{eq:trgform}, cf. Lemma \ref{lem:2.1.f}.

Accordingly, suppose $X := (X,A)$,  $|X| = n$, is a structured set
and let $W \subseteq X$ be an independent subset with $|W| = k$.
Then we have the following properties satisfied:
\begin{enumerate} \ealph
    \item $\nk{W} = k \leq n$, \pSkip
    \item $\nk {W} \leq \nk{X}$, \pSkip
    \item $ \nk{X} \leq n$.
    %\item
    %\item
\end{enumerate}

\section{Finite lattices and finite boolean modules}

In this section we start an explicit study of certain classes of
structure sets, equipped with extra properties.

\subsection{Posets}
A major example for a structure set, and the most abstract in this
paper, is given by the following well known definition
\cite{Lattice,qtheory}.
\begin{definition}\label{def:poset}
A \textbf{partial order} is a binary relation $\leq$ over a set
$X$ which is reflexive, antisymmetric, and transitive, i.e., for
all $a, b, c \in X$, we have that:
\begin{enumerate}\eroman
    \item $a \leq a$ (reflexivity); \pSkip
    \item if $a \leq b$ and $b \leq a$ then $a = b$
    (antisymmetry); \pSkip
    \item if $a \leq b$ and $b \leq c$ then $a \leq c$ (transitivity).
\end{enumerate}
A pair $(\Pos, \leq)$, with $\leq$  a partial order,  is called a
partially order set -- \textbf{poset} for short.

The \textbf{reverse poset} $\rvs{\Pos} := (\Pos, \geq) $ of $\Pos
:= (\Pos, \leq)$ is defined by reversing the order of $\Pos$,
i.e., $$ p \leq q  \text{ in  } P  \dss {\iff} p \geq q \text{ in
} \rvs{P}.
$$
\end{definition}

\begin{definition}\label{def:upsetPos} Given a poset element $p \in
P$, we  define the \textbf{up-set} $\pup{p}$ and the
\textbf{down-set} $\pdn{p}$  of $p$ respectively as
$$ \pup{p} := \{ x \in P \ds | x \geq p \},
\dss{\text{and}} \pdn{p}  := \{ x \in P \ds | x \leq p \}.$$
A subset $I \subseteq P$  of a poset $P := (P, \leq)$ is an
\textbf{order ideal} if for
$$ p \in I \text{ and } q \leq p \dss \Rightarrow q \in I.$$
\end{definition}
Accordingly,  for each $p \in P$, the down-set $\pdn{p}$ is an
order ideal of $P$.

\begin{remark}\label{rmk:posetMat} The structure matrix $A(\Pos) := (a_{i,j})$ of
a poset $\Pos:=(\Pos, \leq)$, cf. \eqref{eq:matRelation}, is given
by
\begin{equation}\label{eq:matRelationPos}
 a_{i,j} := \left \{
\begin{array}{ll}
  1 & \text{if }  p_i \leq   p_j, \\[1mm]
  0 & \text{otherwise},  \\
\end{array}
\right. \end{equation} and therefore has the proprieties
\begin{enumerate} \ealph
    \item $a_{i,i} = 1$, by
    reflexivity, for every $i = 1,\dots, n$. \pSkip
    \item $a_{i,j} = 1$ iff $a_{j,i} =0$, by antisymmetry,  for any $i \neq j.$
\end{enumerate}
\end{remark}

\begin{proposition}\label{prop:rposet} The reversing $\rvs{P}$ of a poset $P := (P, \leq)$,
recorded by $(P, A),$ is equivalent to $(P, \trn{A})$.
\end{proposition}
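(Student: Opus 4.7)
The plan is to unwind the definitions of the reverse poset $\rvs{P}$ and of the structure matrix from Remark \ref{rmk:posetMat}, and to verify directly that the result matches the definition of the transpose $\trn{A}$. The statement is essentially a definitional verification, so I do not anticipate a genuine obstacle; the main care needed is to keep track of which copy of the order ($\leq$ in $P$ versus $\geq$ in $\rvs{P}$) is being applied.

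Concretely, I would fix the enumeration $P = \{p_1, \dots, p_n\}$ and write $A = A(P) = (a_{i,j})$ according to \eqref{eq:matRelationPos}, so that $a_{i,j} = 1 \iff p_i \leq p_j$ in $P$. Next, letting $A' = A(\rvs{P}) = (a'_{i,j})$ be the structure matrix of $\rvs{P} = (P, \geq)$, I would apply the same rule to the order of $\rvs{P}$: $a'_{i,j} = 1 \iff p_i \geq p_j$ in $P$, and $a'_{i,j} = 0$ otherwise.

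The key observation is then the tautology $p_i \geq p_j \iff p_j \leq p_i$ in $P$, which by the definition of $A$ is equivalent to $a_{j,i} = 1$. Hence $a'_{i,j} = a_{j,i}$ for all $i, j$, which is exactly the defining relation $\trn{a_{i,j}} = a_{j,i}$ of the transpose. Therefore $A(\rvs{P}) = \trn{A}$, so that $(\rvs{P})$ is recorded by $(P, \trn{A})$, which is the required equivalence. The zero case is handled symmetrically by the same tautology, and reflexivity ($a_{i,i} = a'_{i,i} = 1$) is preserved automatically since transposition fixes diagonal entries.
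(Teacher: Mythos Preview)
Your proof is correct and is precisely the explicit unwinding that the paper leaves implicit when it says the result is ``obtained immediately by Remark~\ref{rmk:posetMat}.'' The paper gives no further detail, so your approach coincides with theirs.
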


\begin{proof} Obtained immediately by Remark \ref{rmk:posetMat}.
\end{proof}

\begin{corollary}\label{cor:reversing} Given a poset $P := (P, \leq)$, reversing the order
on $P$ does not change the rank of~$P$, that is $\nk{P } = \nk
{\rvs{P}}$.
\end{corollary}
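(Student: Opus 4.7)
The proof is essentially a chain of substitutions using results already established in the excerpt, so my plan is to outline exactly that chain and verify each link is legitimate. The key observation is that $\nk{P}$ is defined in terms of the rank of the complement $\cmp{A}$ of the structure matrix, and reversing the poset replaces $A$ by $\trn{A}$ (by Proposition \ref{prop:rposet}); hence everything reduces to showing that $\rnk{\cmp{A}} = \rnk{\cmp{(\trn{A})}}$.

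First I would unpack the definitions: starting from Definition \ref{def:nook}, we have
\[
\nk{P} = \rnk{\cmp{A}}, \qquad \nk{\rvs{P}} = \rnk{\cmp{(\trn{A})}},
\]
where the second equality uses Proposition \ref{prop:rposet} to identify $\rvs{P}$ with $(P, \trn{A})$ and then applies the definition of c-rank to this structured set.

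Next I would invoke the commutativity of transposition and complement (the proposition immediately preceding Notation \ref{nott}): $\cmp{(\trn{A})} = \trn{(\cmp{A})}$. This converts the right-hand side into $\rnk{\trn{(\cmp{A})}}$. Finally, Corollary \ref{cor:nRankTran}(iv) asserts invariance of rank under transposition, giving $\rnk{\trn{(\cmp{A})}} = \rnk{\cmp{A}} = \nk{P}$, which is exactly what we want.

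I do not anticipate any real obstacle here: the statement is a formal consequence of the three already-proved facts (reversal corresponds to transposition of the structure matrix, complement commutes with transposition, and rank is transposition-invariant), and the entire argument is a three-line equality chain. The only thing to be careful about is to cite the right results in the right order and not confuse the complement (which swaps $0$ and $1$ while leaving ghost entries fixed) with the transpose.
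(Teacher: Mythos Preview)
Your argument is correct and follows the same route as the paper, which simply writes ``Clear from Proposition~\ref{prop:rposet} and Corollary~\ref{cor:nRankTran}.'' You are in fact slightly more careful than the paper: you make explicit the intermediate use of the commutativity $\cmp{(\trn{A})} = \trn{(\cmp{A})}$, which the paper's one-line proof leaves implicit but which is indeed needed to pass from $\rnk{\cmp{(\trn{A})}}$ to $\rnk{\cmp{A}}$ via transposition-invariance of rank.
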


\begin{proof}
Clear from Proposition \ref{prop:rposet} and Corollary
\ref{cor:nRankTran}.
\end{proof}

We recall an additional known non-negative function on posets (the
{\nook} was one of them, defined earlier in general for structured
set).

\begin{definition}\label{def:posetHgt} The
\textbf{height} of a poset $P := (P, \leq)$, written $\hgt{P}$, is
defined to be the length of the longest strict chain it contains,
i.e.,
$$ \hgt{ P}  := \max \{ \  k  \ds |  p_0 < p_ 2 < \cdots < p_k, \quad p_0 , p_ 2
, \dots, p_k \in P \}. $$
\end{definition}

\subsection{Semilattices} Let us recall some standard definitions \cite{Lattice,qtheory}.
\begin{definition}\label{def:semilattice} A (finite) poset $P:= (P, \leq)$ whose elements admit  a
\textbf{join} relation (also known as the least upper bound, or
the supremum), i.e., $p_i \vee  p_j$ for all $p_i, p_j \in P$, is
called a (finite, join) \textbf{semilattice}, denoted as $S:= (S,
\leq)$.
\end{definition}
We define the category $\SLat$ of semilattices, whose maps are
sup-maps, given as follows ($\bigvee X $ stands for the common
join  of the members of $X$):
\begin{definition}\label{def:supmap}
A semilattice map
$$ \vrp: (S, \leq) \To (S',\leq)$$
that satisfies $\vrp(\bigvee X) = \bigvee(\vrp(X))$  for all $X
\subseteq S$ is called a \textbf{sup-map}.
\end{definition}

Any  semilattice $(S, \leq)$ can be viewed as a   semigroup $(S,
+)$ by defining $$ s + t := t \vee s, \qquad \text{for any } s,t
\in S.$$ This semigroup is commutative ($ s + t = t + s$) and
idempotent ($s + s = s$ for any $s \in S$). When a (join)
semilattice $S$ has a bottom element $B := 0$ then $(S, +)$  is an
idempotent commutative monoid with unit~$0$, i.e., $s + 0 = 0+s =
s$ for every $s \in S$. We denote this monoid as  $(S, + , 0 )$.

Conversely given an idempotent commutative monoid $S:=(S, +, 0)$
with unit $0$, one can define the semilattice $(S, \leq)$ having
the role
$$ s \leq t  \quad \Leftrightarrow \quad s +r =t \ \text{for some } r \in S.  $$
This role gives a proper  poset. Indeed, $s \leq s$ (reflexivity)
since $s + 0 = s$ for every $s$, and since $s_1 \leq s_2  \leq
s_3$ iff there exist $r_1, r_2$ such that $s_1 + r_1 = s_2$ and
$s_2 + r_2 = s_3$ which implies $s_1 + r_1 + r_2 = s_2 + r_2 =
s_3$, thus $s_1 \leq s_3$ (transitivity). Finally (antisymmetry),
$s \leq t \leq s$ iff there exist $r_1, r_2 \in S$ such that $s +
r_1 = t$ and $t +r_2 = s$, but then
$$ s + t = s + (s + r_1) = s + r_1 = t, \qquad s + t = (t + r_2) + t = t + r_2 = s,  $$
implying $s =t$.

Since $S$ is a monoid, $s +t$ always exists, and   thus we define
$$ s \vee t = p := \min \{ q \in S  \ds |  q \geq s,t\}.
$$ Moreover, since $p = s + x = t + y$  for some $x,y \in S$, then
$$ p =  p + p = s + t + x +y  \geq s + t.$$

Having the above construction, we see that the category $\ICM$ of
idempotent commutative monoids, whose maps are monoid homomorphism
(i.e., $\phi: S \to S'$ such that  $\phi(s+t) = \phi(s) + \phi(t)$
for any $s,t \in S$), is isomorphic to the category of $\SLat$
whose objects are semilattices and its maps are sup-maps.

\subsection{Lattices} We open again with a familiar definition.

\begin{definition}\label{def:lattice }
A (finite) poset $(P, \leq)$ in which each pair of elements $p_i,
p_j$ admits a \textbf{join}  $p_i \vee p_j$ (also known as the
least upper bound, or the supremum) and a \textbf{meet} $p_i
\wedge p_j$ (also known as the greatest lower bound, or the
infimum)  is a (finite) \textbf{lattice}, written  $L := (L,
\leq)$.
\end{definition}

Given a subset $X \subseteq L$, $X := \{x_1, \dots, x_m \}$, we
write
$$ \bigvee X := x_1 \vee x_2 \vee \cdots \vee x_m, \qquad
\bigwedge X := x_1 \wedge x_2 \wedge \cdots \wedge x_m,$$
respectively for the common join and meet of the members of $X$.

 A poset $(P, \leq)$ which has a join for each
pair of elements and a (global) unique minimal element $B$, i.e.,
$B \leq p$ for all $p \in P$, called \textbf{bottom element}, is
also a lattice, where the meet is defined  by
$$ p_i\wedge p_j := \bigvee \{ q \in P \ds | q \leq p_i, p_j\}, \qquad \forall p_i, p_j \in P. $$
Note that $X := \{ q \in P \ds | q \leq p_i, p_j\} $ is nonempty
since $B \in X$, formally we define $\bigvee \emptyset := B $.
When a lattice $L$ has a unique maximal element, we call this
element the \textbf{top element} of $L$, and denote it $T$.

A lattice $(L, \leq)$  is \textbf{distributive} if $$s \wedge (t
\vee  t') = (s \wedge t ) \vee (s \wedge t')$$ for all $s,t,t' \in
L$. It is not difficult to show that this condition is equivalent
to the  dual condition $$s \vee (t \wedge  t') = (s \vee t )
\wedge (s \vee t').$$

A lattice $(L, \leq)$  is \textbf{complete} if for every subset $X
\subseteq L$, the join $\bigvee X$ and the meet  $\bigwedge X$
exist, where for $X = \emptyset$ we set $$\bigvee \emptyset := B
\dss {\text{and}} \bigwedge \emptyset := T.$$ The \textbf{dual
lattice} $\dlL := (\dlL, \leq)$  is defined over the same set of
elements of $L$ having the reversed order~$\leq$, i.e., $\dlL =
\rvs{L}$ (cf. Definition \ref{def:poset}).

A lattice map
$$ \vrp: (L, \leq) \To (L',\leq)$$
that satisfies $\vrp(\bigvee X) = \bigvee(\vrp(X))$  for all $X
\subseteq L$ is called as before  \textbf{sup-map}.  Taking $X=
\emptyset$, this implies $\vrp(B) = B'$. Similarly, $\vrp$ is
called an \textbf{inf-map} if $\vrp(\bigwedge X) =
\bigwedge(\vrp(X))$ for all $X \subseteq L$, where now $\bigwedge
\emptyset = T$, and thus $\vrp(T) = T'$. A map which is both
sup-map and inf-map is termed \textbf{sup-inf-map}.

We set $\LAT$ to be the category of  lattices whose map are
sup-maps, which  is a full subcategory of the category $\SLat$ of
semilattices. Both have the full subcategories $\FSLat$ a and
$\FLat$ of finite semilattices and finite lattices, respectively,
whose maps are sup-maps as well.

The reader should be note that the sup-maps preserve  the
structure of lattices partially, stronger maps are to be
considered latter, incorporating the inf-maps.

% \subsection{Primes and co-primes}
Given a lattice $L := (L, \leq)$, where $X \subseteq L$, let us
recall some definitions from \cite[6.1.2, p430]{qtheory}.

\begin{definition}\label{def:sji} Let $\ell, m $ be elements of a lattice $L:= (L, \leq)$:
\begin{enumerate}
\ealph
    \item $\ell \in  L$  is \textbf{strictly join irreducible} (\sji) if whenever $ \ell = \bigvee
    X$
    there exists $x \in X$ such that $\ell = x$. \pSkip

    \item $\ell \in  L$  is \textbf{join irreducible} (\ji) if whenever $ \ell  \leq \bigvee
    X$
    there exists $x \in X$ such that $\ell \leq x$. \pSkip

    \item $m \in L$ is called \textbf{strictly meet irreducible} (\smi) if $m =
    \bigwedge X$ implies that there exists $x \in X$ such that $m =
    x$. \pSkip

    \item $m \in L$ is called \textbf{meet irreducible} (\mi) if $m
    \geq
    \bigwedge X$ implies that there exists $x \in X$ such that $m \geq x$.

\end{enumerate}
Join irreducibles are also called \textbf{primes}, while meet
irreducibles are called \textbf{co-primes}.
\end{definition}

For  a finite lattice $L := (L, \leq)$ it is easy to see that the
\smi 's that are not  $T$ (the top element) are the unique minimal
sets of meet generators of $L$, by universal algebra \cite{Mac}.
The top element $T$ is meet generated by the empty set.

We define $ \smiNoT (L) $ to be the number of \smi's not $T$,
i.e.,
$$ \smiNoT (L) := |\{ \ell \in L \ds | \ell \text{ is } \smi \neq T\} |,$$
and similarly define the number of \smi's
$$ \smiNo (L) := |\{ \ell \in L \ds | \ell \text{ is } \smi  \} |.$$
 Dually,  the $\sji$'s not $B$ (the bottom element) are the unique minimal subsets of
join generators of $L := (L,\leq)$, and we define their number to
be
$$ \sjiNoB (L) := |\{  \ell \in L \ds | \ell \text{ is  \sji} \neq B\} |,$$
and let
$$ \sjiNo (L) := |\{  \ell \in L \ds | \ell \text{ is  \sji} \} |.$$
\begin{example}\label{exmp:3.4.1}
%Consider the following example of  a semilattice $SL_n$ over $n$
%positive integers. Let $X_{2n}$ be a set with $2n$ elements. Let
%$SL_n$ be the set of all subsets of  $X_{2n}$ of cardinality $\geq
%n$ to together with the empty-set $\emptyset$.

Let $X_{2n}$ be a set of $2n$ elements, and let $\Lm_{2n}$ be the
semilattice whose elements are all subset $Y \subseteq X$ of
cardinality $ \geq n$ of $X_{2n}, $ together with the empty set
$\emptyset$, and the parietal order determined by inclusion. The
join of $\Lm_{2n}$ is set union and the determined  meet is set
intersection, unless its cardinality if is less than $n,$ which in
this case is made $\emptyset$.

%$$\xymatrix{
% & \{1,2,3\}  & \{1,2,4\} & \{1,3,4\} & \{2,3,4\} &  \\
%  \{1,2\}  & \{1,3\}  & \{1,4\} & \{2,3\} & \{2,4\} & \{3,4\}
%  }$$
%
%
% $$\xymatrix{
%    & c      \ar@/^/@{-}[dr]   \ar@/_/@{-}[dl]\\
%           a %
%            \ar@/_/@{-}[rr]    &&      b
%       \ar@/^/@{-}[rr] & & d
% }$$

It is easy to see that all \sji's not $\emptyset$ of $\Lm_{2n}$
are all the subsets of order $n$ of $X_{2n}$ and the \smi's are
all the subsets having $2n-1$ elements. Thus, we have the
following
$$ |\Lm_{2n}| = \frac{2^{2n} - \chos{2n}{2}}{2} + \chos{2n}{n} +1,  $$
where
$$ \sjiNo(\Lm_{2n}) = \chos{2n}{n} \dss{ \text{and}} \smiNo(\Lm_{2n}) = 2n. $$
Therefore $\sjiNo$ and $\smiNo$ can be differ exponentially.
\end{example}
As shown in Corollary \ref{cor:reversing}, the {\nook}  does not
changed under reversing the order. On the other hand, Example
\ref{exmp:3.4.1} shows that $\sjiNo$ and $\smiNo$ are changing
significantly; as $\sjiNo$ and $\smiNo$ are dual, they interchange
when reversing the order. Unlike the situation of \sji \ and \smi
\ for finite lattices, whose members can be differ, we will see
that the members of \ji's and \mi's are always equal.

\begin{lemma} \label{lem:4.5} Suppose $\ell_1, \dots, \ell_{j-1}, \ell_{j_1} \vee \ell_{j_2}, \ell_j , \dots,
\ell_k $ are independent in the lattice $(L, \leq)$. Then, for $i
=1 $ or $i=2$,  $\ell_1, \dots, \ell_{j-1}, \ell_{j_i}, \ell_j ,
\dots, \ell_m $ are independent.
\end{lemma}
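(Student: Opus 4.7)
The plan is to reduce the statement to a permanent computation via Theorem~\ref{thm:rnkSing}, exploiting the pointwise comparison of complement columns that arises from the lattice join. First, since $W = \{\ell_1, \ldots, \ell_{j-1}, \ell_{j_1} \vee \ell_{j_2}, \ell_j, \ldots, \ell_k\}$ is c-independent, Theorem~\ref{thm:rnkSing} yields a nonsingular square submatrix $M$ of $\cmp{A}$ whose columns correspond to $W$, with some witness row set $U$. By Lemma~\ref{lem:2.1.f} I may put $M$ into triangular form with the column of $\ell_{j_1} \vee \ell_{j_2}$ at some position $t$, so that the diagonal entry at $(t,t)$ is $1$ and all entries above it in column $t$ vanish. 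Reading this off gives $u_t \not\leq \ell_{j_1} \vee \ell_{j_2}$, hence $u_t \not\leq \ell_{j_1}$ and $u_t \not\leq \ell_{j_2}$, while $u_s \leq \ell_{j_1} \vee \ell_{j_2}$ for every $s < t$.

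The key observation is the entrywise inequality $c_{\ell_{j_1} \vee \ell_{j_2}} \leq c_{\ell_{j_i}}$ in $\cmp{A}$, which holds because $p \leq \ell_{j_i}$ implies $p \leq \ell_{j_1} \vee \ell_{j_2}$. Let $M_i$ be the matrix obtained from $M$ by replacing its column at position $t$ with $c_{\ell_{j_i}}|_U$. Since only this column changes and its entries only grow, every term of $\per{M_i}$ dominates the corresponding term of $\per{M}$ in $\sbool$; thus $\per{M_i} \geq \per{M} = 1$, giving $\per{M_i} \in \{1, \1\}$. If $\per{M_i} = 1$ for some $i \in \{1,2\}$, then $M_i$ is nonsingular and Theorem~\ref{thm:rnkSing} delivers the c-independence of the corresponding replaced set.

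The remaining obstacle is to rule out $\per{M_1} = \per{M_2} = \1$ occurring simultaneously. In this situation each $M_i$ admits a second contributing permutation $\pi_i \neq \id$; the triangular structure of $M$ forces every such $\pi_i$ to send some $s_i < t$ to position $t$, and a short permutation bookkeeping shows $u_{s_i} \not\leq \ell_{j_i}$ yet $u_{s_i} \leq \ell_{j_1} \vee \ell_{j_2}$. I would then argue that the combined data of $\pi_1$ and $\pi_2$ is incompatible with $\per{M} = 1$, either by producing a contributing permutation already present in $M$ itself (a contradiction), or by exchanging a row of $U$ for a strategically chosen alternative witness that certifies the nonsingularity of one of $M_1, M_2$. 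I expect this final row-exchange step to be the main technical hurdle: in a non-distributive lattice $u_s \leq \ell_{j_1} \vee \ell_{j_2}$ may hold while both $u_s \leq \ell_{j_1}$ and $u_s \leq \ell_{j_2}$ fail, so no pointwise trick suffices and the exchange must be driven by the precise permutation combinatorics of the triangular witness.
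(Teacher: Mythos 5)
Your setup matches the paper's up to the point where you fix the orientation of the order relation inside the witness, and that choice is where the argument breaks. You read the complement column of $\ell$ as recording $\{p : p \not\leq \ell\}$, so that passing from $\ell_{j_1}\vee\ell_{j_2}$ to $\ell_{j_i}$ can only turn $0$'s into $1$'s; this forces you into the monotone permanent estimate $\per{M_i}\geq\per{M}=1$ and leaves the case $\per{M_1}=\per{M_2}=\1$ unresolved. You acknowledge this yourself, and the proposed repairs (permutation bookkeeping, exchanging a row of $U$) are not carried out, so as written the proposal does not prove the lemma.

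The paper's proof avoids the obstacle entirely by triangularizing the witness with respect to the opposite relation, where the relevant entry records whether $\ell_t\not\leq m_s$. Two one-line lattice facts then finish it. First, the diagonal condition $\ell_{j_1}\vee\ell_{j_2}\not\leq m_j$ forces $\ell_{j_1}\not\leq m_j$ or $\ell_{j_2}\not\leq m_j$, since if both joinands lay below $m_j$ so would their join; this needs no distributivity. Second, every zero entry of the replaced line is inherited, because $\ell_{j_i}\leq\ell_{j_1}\vee\ell_{j_2}\leq m_s$ gives $\ell_{j_i}\leq m_s$. With this orientation the entries can only decrease off the diagonal while the chosen diagonal entry stays $1$, so the triangular form \eqref{eq:trgform} is preserved verbatim, the same witness $U$ certifies the independence of $\ell_1,\dots,\ell_{j-1},\ell_{j_i},\ell_{j+1},\dots,\ell_k$ for the chosen $i$, and no permanent computation or case analysis is required. (In fairness, the paper's stated definitions of the structure matrix and of c-independence via columns point in the direction you took, and its proofs of Lemma \ref{lem:4.5} and Theorem \ref{thm:4.9} silently pass to the transposed relation; but that transposed orientation is the one under which the lemma has a clean proof, whereas your version, with the join's column dominated entrywise by the joinands' columns, genuinely cannot conclude without excluding the simultaneous-ghost case, which you do not do.)
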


\begin{proof} Write  $\ell_j := \ell_{j_1} \vee \ell_{j_2}$ and let
$U := \{ m_1, \dots m_k\}$ be the witness of $W := \{ \ell_1,
\dots \ell_k \} $ in $\cmp{A} := \cmp{A(L)}. $ Reordering the rows
of $\cmp{A}$, we may assume that the witness
$\rwcl{\cmp{A}}{U}{W}$ is of the form \eqref{eq:trgform}. Thus
$$m_s \not \leq \ell_t, \quad \text{for every }  1 \leq  s < t
\leq k,$$ and therefore $\ell_{j_1} \vee \ell_{j_2} = \ell_j  \not
\leq m_j$. So, either $\ell_{j_1}   \not \leq m_j$ or $\ell_{j_2}
\not \leq m_j$, say $\ell_{j_1}   \not \leq m_j$. But then $U = \{
m_1,\dots, m_k\}$ is also a  witness for $W_1 : =\{ \ell_1, \dots,
\ell_{j-1}, \ell_{j_1}, \ell_{j+1}, \dots \ell_k \}$ being
independent.
\end{proof}

\begin{proposition}\label{prop:4.4} $ \nk {L} \leq \sjiNo (L)$ for any finite lattice $L := (L ,
\leq)$.
\end{proposition}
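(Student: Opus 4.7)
The plan is to convert an arbitrary c-independent subset $W$ of size $\nk{L}$ into a c-independent subset of the same cardinality whose members are all \sji; the bound $\nk{L} \leq \sjiNo(L)$ then follows immediately. The key tool is Lemma~\ref{lem:4.5}, which allows a non-\sji\ element to be replaced by a strictly smaller one while preserving c-independence.

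Let $W = \{\ell_1, \dots, \ell_k\}$ with $k = \nk{L}$ be c-independent, and suppose some $\ell_j$ is not \sji. Then $\ell_j = \bigvee X$ for a finite $X = \{x_1, \dots, x_r\} \subseteq L$ with $\ell_j \notin X$; consequently each $x_i < \ell_j$ strictly, and the case $r = 1$ contradicts $\ell_j \notin X$, so $r \geq 2$. Writing $\ell_j = x_1 \vee (x_2 \vee \cdots \vee x_r)$ and applying Lemma~\ref{lem:4.5}, at least one of $W - \ell_j + x_1$ or $W - \ell_j + (x_2 \vee \cdots \vee x_r)$ is c-independent. Reapply Lemma~\ref{lem:4.5} to the compound join in the second case, and continue; after at most $r - 1$ applications, some $W - \ell_j + x_i$ is c-independent. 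Independence forces the columns of $\cmp{A}$ to be pairwise distinct (two equal columns sum to a ghost vector), so $x_i$ must differ from every other $\ell_s$, and we obtain a bona fide $k$-element c-independent set in which $\ell_j$ has been strictly reduced.

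Each such replacement strictly lowers one coordinate in the finite lattice $L$, so the procedure terminates at a c-independent $W'$ of size $k$ whose members are all \sji, whence $k = |W'| \leq \sjiNo(L)$.

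The delicate point is that the reduction cannot be applied to $\ell_j = B$ (the bottom admits no nontrivial join decomposition outside the empty one, which falls outside the hypothesis of Lemma~\ref{lem:4.5}); but $B$ is counted among the \sji\ elements of $\sjiNo(L)$---consistent with the separate notation $\sjiNoB$ for the count that excludes $B$---so the process genuinely terminates with an \sji\ replacement for every non-\sji\ entry, and the inequality is established.
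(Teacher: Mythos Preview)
Your proof is correct and follows essentially the same approach as the paper's: both start from a maximal c-independent set and repeatedly invoke Lemma~\ref{lem:4.5} to replace each element by an \sji\ element, yielding an independent set of the same size contained in the \sji's. The only cosmetic difference is that the paper first expresses each $\ell_j$ as a join of \sji's not $B$ (using that these join-generate $L$) and then applies Lemma~\ref{lem:4.5}, whereas you run a strict-descent argument; your explicit handling of distinctness after replacement and of the bottom element $B$ is in fact more careful than the paper's sketch.
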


\begin{proof} Assume  $m := \nk {L}$, and let $\ell_1, \dots, \ell_m
$ be independent.
 Since the \sji's not $B$ are join generate ~ $L$, each
$\ell_j$ can be written as $\ell_j = \bigvee_ { k =1}^{m}
\ell_{j_k}$ where $\ell_{j_k}$ are \sji's not $B$. Applying Lemma
\ref{lem:4.5} inductively, we obtain an independent subset
$\ell_1', \dots, \ell_m' $ where each $\ell'_j$ is $\sji \neq B$.
(This is a stronger statement than Proposition \ref{prop:4.4}.)
\end{proof}

\begin{corollary}\label{cor:4.7}
$L := (L, \leq)$ has  an independent subset $X$ of maximal
cardinality, .i.e.,
 $|X| = \nk{L}$, which is contained in $\{ \ell \in L \ds | \ell \text{ is
} \sji \neq B \}.$
\end{corollary}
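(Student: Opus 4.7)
The plan is to observe that Corollary \ref{cor:4.7} is in fact the stronger conclusion already obtained inside the proof of Proposition \ref{prop:4.4}; the corollary just isolates it as a statement worth recording. So the proof essentially reproduces and finalizes that argument.

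First I would take a maximum independent subset $W := \{\ell_1, \dots, \ell_m\} \subseteq L$ with $m = \nk{L}$, which exists by the definition of \nook. Since (as noted in \S 3) the strictly join irreducibles different from $B$ are the unique minimal set of join generators of the finite lattice $L$, I can write each $\ell_j = \bigvee_{k} \ell_{j,k}$ with every $\ell_{j,k}$ being $\sji \neq B$. The goal is then to peel off one summand per element, replacing $\ell_j$ by some $\ell_{j,k_j}$ while preserving independence.

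The iteration is performed one index at a time by repeated application of Lemma \ref{lem:4.5}. At stage $j$, having already produced an independent subset $\{\ell_1', \dots, \ell_{j-1}', \ell_j, \ell_{j+1}, \dots, \ell_m\}$ where $\ell_1', \dots, \ell_{j-1}'$ are \sji's different from $B$, I write $\ell_j$ as $\ell_{j,1} \vee \ell_{j,2} \vee \cdots$ and apply Lemma \ref{lem:4.5} successively (first splitting off one \sji \ summand, then another, and so on) until the whole $\ell_j$ has been replaced by a single strictly join irreducible $\ell_j' \neq B$, without losing independence at any intermediate step. After $m$ such passes I obtain $X := \{\ell_1', \dots, \ell_m'\}$ with each $\ell_j'$ a \sji \ different from $B$.

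Finally I would confirm $|X| = m$. This amounts to checking the $\ell_j'$ are pairwise distinct, which is forced by independence: if two columns of $\cmp{A(L)}$ coincide, their sum has every boolean $1$-entry pushed to $\1$ and every $0$-entry kept at $0$, so the sum lies in $\tGz^{(n)}$ and the pair is dependent by Definition \ref{def:tropicDep}. Since independence is preserved throughout, the $\ell_j'$ must be distinct, giving an independent subset of cardinality $\nk{L}$ inside $\{\ell \in L \mid \ell \text{ is } \sji \neq B\}$. The only subtlety is the bookkeeping in the inductive replacement, ensuring that at each step Lemma \ref{lem:4.5} is applied to a single binary join decomposition (not the whole multi-way join at once), so that the hypothesis of that lemma is literally satisfied.
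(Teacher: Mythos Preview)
Your proposal is correct and matches the paper's approach exactly: the paper gives no separate proof for Corollary~\ref{cor:4.7}, since the proof of Proposition~\ref{prop:4.4} already establishes it (and says so parenthetically), via precisely the inductive application of Lemma~\ref{lem:4.5} you describe. Your added remarks on distinctness of the $\ell_j'$ and on reducing to binary joins before invoking Lemma~\ref{lem:4.5} are careful details that the paper leaves implicit.
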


%We introduce additional non-negative functions on lattice (the
%{\nook}  was one of them, defined in general for structured set).
%
%\begin{definition} Let $L := (L, \leq)$ we define the
%\textbf{height} of $L$, written $\hgt{L}$,  to be the length of the
%longest chain (necessarily starting at $B$ and ending at $T$) it
%contains, i.e.,
%$$ \hgt{ L}  := \max \{  k  \ | \ \ell_0 \leq \ell_ 2 \leq \cdots \leq \ell_k, \quad \ell_0 , \ell_ 2
%, \dots, \ell_k \in L \}. $$
%\end{definition}

\subsection{Spec of finite lattices}
The previous section leads us to a spectral theory of finite
lattices, see \cite[\S6-\S7]{qtheory}. Following Marshal, Stone,
and others, the basic approach of spectral theory of  finite
lattices is to consider the maximal distributive lattice generated
by the set of surmorphisms. For this purpose, we need more
structure.

Let $(\bool, \leq)$, $\bool:= \{0,1\}$, be the two element lattice
with the standard partial order. Given an element $\ell \in L$, $L
:= (L, \leq)$ a lattice, we define the lattice map
\begin{equation}\label{eq:sup-map}
 \lvrp_{\ell}: (L, \leq ) \onto (\bool, \leq),  \qquad \lvrp_{\ell} : x \mapsto  \left \{
\begin{array}{lll}
  1 &  & x \leq \ell, \\
  0 &  & \text{else.} \\
\end{array}
\right.
\end{equation}  The map $\lvrp_{\ell}$ is a sup-inf-map onto
$(\bool, \leq)$ iff $\ell$ is \mi (cf. Definition \ref{def:sji}),
and $\ell \neq T$. Conversely, a map  $\vrp : (L, \leq ) \onto
(\bool, \leq )$ is sup-inf-map onto $\bool$ iff $ \bigvee \ivrp(0)
= \ell$ with $\ell \neq T$ an \mi \ and $\vrp = \lvrp$.

The dual result also holds, namely the map
\begin{equation}\label{eq:inf-map}
 \gvrp_{\ell}: (L, \leq ) \onto (\bool, \leq), \qquad \gvrp_{\ell}
: x \mapsto  \left \{
\begin{array}{lll}
  1 &  & x \geq \ell, \\
  0 &  & \text{else.} \\
\end{array}
\right.
\end{equation} is a sup-inf-map onto $(\bool, \leq)$ iff
$\ell$ is \ji, and $\ell \neq B$. Conversely, a map  $\vrp : (L,
\leq ) \onto (\bool, \leq )$ is sup-inf-map onto $\bool$ iff $
\bigwedge \ivrp(1) = \ell$ with $\ell \neq B$ a \ji \ and $\vrp =
\gvrp$.

Given such a map $\vrp : (L, \leq ) \onto (\bool, \leq )$ as
above, we have a 1:1 correspondence $$ \bigvee \ivrp(0) \dss
\leftrightarrow \bigwedge \ivrp(1)$$ between \mi's not $T$ and
\ji's not $B$. Thus the number of \mi's not $T$ equals that of
\ji's not $B$, we denote this number $\miNoT$ and define
\begin{equation}\label{eq:s-map} \ss(L) : = \miNoT(L) -1 = |\{ \ell \in L \ds | \ell \text{ is } \mi \neq T
\}| -1.
\end{equation}

We consider the spec lattice morphism for a finite lattice $(L ,
\leq) : $
\begin{equation}\label{eq:spec}
    \spec(L): (L , \leq) \ds \to (\bool, \leq)^{\ss(L)}
\end{equation}
with
$$ \spec(L) =   \Dl \bigg( \bigotimes_{\ell \text{ is \mi } } \lvrp_\ell\bigg) =
\Dl \bigg(   \bigotimes_{\ell \text{ is \ji } } \gvrp_\ell \bigg)
$$ where $\Dl$ is the diagonal map. (This equality derived by the
above discussion.)

The map $\spec$ is  a sup-inf lattice morphism of $(L, \leq)$ onto
the finite distributive lattice $(\bool, \leq)^{\ss(L)}$, which is
isomorphic to the all subsets of a set of cardinality $\ss(L)$
under inclusion. Since subsets of a distributive lattice are
closed under meet and join containing $B$ and $T$, then clearly
they are distributive as well.

The image $\spec(L)$ of $\spec$ is a distributive lattice, and we
aim to show that it is the maximum  distributive image of a
sup-inf map of  $(L, \leq)$.
%
%\begin{definition}\label{def:upset} Given a lattice element $\ell \in
%L$, we define the \textbf{upset} $\pup{\ell}$ and \textbf{downset}
%$\pdn{\ell}$ respectively of $\ell$ respectively as
%$$ \pup{\ell} := \{ x \in L \ds | x \geq \ell \}, \dss{\text{and}} \pdn{\ell} := \{ x \in L \ds | x \leq \ell \}.$$
%\end{definition}
%
Using the notation of Definition \ref{def:upsetPos}, we deduce
directly from the definition of $\spec$ that for any $\ell, \ell'
\in L$
$$ \spec(\ell) = \spec(\ell') \dss
\iff \pup{\ell} \cap \{ \mi \neq T\} = \pup{\ell'} \cap \{ \mi
\neq T\} \dss \iff \pdn{\ell} \cap \{ \ji \neq B \} = \pdn{\ell'}
\cap \{ \ji \neq B \}. $$

Another important categorical notion is the Adjoint concept, known
also as ``Galois connection'', for finite lattices which is as
follows:

\begin{proposition}[Adjoint of sup-maps]\label{prop:ajoint}
Let $(L, \leq)$ and $(L', \leq)$ be two finite lattices. Assume
that $\vrp: (L, \leq) \to (L', \leq)$ is a sup-map, and let $\psi:
(L', \leq) \to (L, \leq)$ be the map (denoted also as
$\adjsup{\vrp}$) defined by
\begin{equation}\label{eq:gg}
\psi (\ell') := \adjsup{\vrp} (\ell') : = \bigvee \ivrp(\ell')
\end{equation} for each $\ell' \in L'$.
Then, the following properties hold:
\begin{enumerate}
    \item $\psi$ is an inf-map, \pSkip
    \item $\vrp(\ell) \leq \ell' \ds \iff \ell \leq \psi (\ell')$,
    \pSkip

    \item $\psi \circ  \vrp \circ \psi = \vrp $ and  $\vrp \circ \psi \circ \vrp = \vrp
    ,$ \pSkip

    \item $\psi$ is injective iff $\vrp$ is surjective, $\vrp$ is
    injective iff $\psi$ is surjective, \pSkip

    \item $\vrp(\ell) = \bigwedge \ipsi(\ell)$.
\end{enumerate}
\end{proposition}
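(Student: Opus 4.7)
The plan is to establish (2), the Galois-type adjunction $\vrp(\ell) \leq \ell' \iff \ell \leq \psi(\ell')$, first, and then deduce (1), (3), (4) and (5) as formal consequences in the style of a standard Galois connection argument.

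For (2), one direction is almost immediate: if $\ell \leq \psi(\ell') = \bigvee \ivrp(\ell')$, applying $\vrp$ (which preserves all joins by Definition \ref{def:supmap}, and is therefore order-preserving) gives
\[
\vrp(\ell) \;\leq\; \vrp\Bigl(\bigvee \ivrp(\ell')\Bigr) \;=\; \bigvee \vrp\bigl(\ivrp(\ell')\bigr) \;\leq\; \ell',
\]
because every $x \in \ivrp(\ell')$ satisfies $\vrp(x) = \ell'$, and the empty case collapses to $B'$ via $\bigvee\emptyset = B$. The converse direction $\vrp(\ell) \leq \ell' \Rightarrow \ell \leq \psi(\ell')$ is the step I expect to be the main obstacle, since $\psi$ is defined as a join over the fibre $\ivrp(\ell')$ rather than over the larger preimage of $\pdn{\ell'}$. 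Here one uses the fact that $\vrp$ is sup-preserving together with finiteness to show that the join of the fibre is in fact the largest element of $L$ sent by $\vrp$ into $\pdn{\ell'}$, whence $\ell$ lies below it.

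Granting (2), I would obtain (1) by the standard adjunction calculation: for any $Y \subseteq L'$ and any $\ell \in L$,
\[
\ell \leq \psi\Bigl(\bigwedge Y\Bigr) \iff \vrp(\ell) \leq \bigwedge Y \iff \vrp(\ell) \leq y\ \forall y\in Y \iff \ell \leq \psi(y)\ \forall y\in Y \iff \ell \leq \bigwedge \psi(Y),
\]
so $\psi(\bigwedge Y) = \bigwedge \psi(Y)$. For (3), specializing (2) with $\ell' = \vrp(\ell)$ gives $\ell \leq \psi\vrp(\ell)$, hence $\vrp(\ell) \leq \vrp\psi\vrp(\ell)$; specializing with $\ell = \psi(\ell')$ gives $\vrp\psi(\ell') \leq \ell'$, hence $\vrp\psi\vrp(\ell) \leq \vrp(\ell)$; combining yields $\vrp\psi\vrp = \vrp$, and the symmetric computation yields $\psi\vrp\psi = \psi$.

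For (4), if $\vrp$ is surjective then every $\ell' \in L'$ equals $\vrp(\ell)$ for some $\ell$, so $\vrp\psi(\ell') = \vrp\psi\vrp(\ell) = \vrp(\ell) = \ell'$ by (3); thus $\vrp\circ\psi = \id_{L'}$, forcing $\psi$ injective. Conversely, $\psi$ injective combined with $\psi\vrp\psi = \psi$ gives $\vrp\psi = \id_{L'}$, hence $\vrp$ surjective. The dual equivalence ($\vrp$ injective iff $\psi$ surjective) follows by the parallel argument using $\vrp\psi\vrp = \vrp$. Finally, for (5), the inclusion $\vrp(\ell) \leq \bigwedge \ipsi(\ell)$ is clear since $\psi(\ell') = \ell$ forces $\ell \leq \psi(\ell')$, whence $\vrp(\ell) \leq \ell'$ by (2); the reverse inequality is obtained by exhibiting $\vrp(\ell)$ itself as an element of $\ipsi(\ell)$, using (3) to check that $\psi\vrp(\ell)$ lies in the appropriate fibre, and then invoking the minimality of the meet.
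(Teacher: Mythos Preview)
The paper's own proof is the single line ``The proof is straightforward,'' so your plan---establish the adjunction (2) and derive (1), (3), (4), (5) formally---is exactly the intended argument, and your derivations of (1), (3), (4) from (2) are the standard ones.

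There is, however, a real gap at precisely the point you yourself flag as the obstacle. Your claim that sup-preservation and finiteness force $\bigvee\{x:\vrp(x)=\ell'\}$ to equal the largest element of $L$ sent into $\pdn{\ell'}$ is false whenever $\ell'\notin\vrp(L)$. Take chains $L=\{0<a<1\}$, $L'=\{0'<b'<1'\}$ and the sup-map $\vrp(0)=\vrp(a)=0'$, $\vrp(1)=1'$; the fibre over $b'$ is empty, so under the literal fibre reading $\psi(b')=\bigvee\emptyset=0$, yet $\vrp(a)=0'\leq b'$ while $a\not\leq 0$, and (2) fails outright. The same issue resurfaces in your treatment of (5), where placing $\vrp(\ell)$ in the fibre $\ipsi(\ell)$ requires $\psi\vrp(\ell)=\ell$, but in general one only has $\psi\vrp(\ell)\geq\ell$. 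The repair is to read $\ivrp(\ell')$ as $\vrp^{-1}(\pdn{\ell'})=\{x\in L:\vrp(x)\leq\ell'\}$, which is the standard upper-adjoint formula; under that reading the converse in (2) is immediate (if $\vrp(\ell)\leq\ell'$ then $\ell$ already belongs to the set being joined), and the remainder of your argument goes through verbatim. Alternatively, note that when $\vrp$ is surjective the two readings coincide, since the maximum $m$ of $\{x:\vrp(x)\leq\ell'\}$ then satisfies $\vrp(m)=\ell'$ and hence lies in the fibre; this covers every application the paper actually makes of the proposition (Lemma~\ref{lem:4.8.a} and the proof of Proposition~\ref{prop3.3.4}), though not item (4) as stated.
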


\begin{proof}
 The proof is straightforward.
\end{proof}

In general, for a finite lattice,  \mi \ implies \smi, but the
converse holds only for a finite distributive lattice.

\begin{proposition}\label{prop3.3.4} The following are equivalent for a finite
lattice $(L, \leq)$:
\begin{enumerate}
    \item $(L, \leq)$  is distributive, \pSkip
    \item $\mi \iff \smi$ (resp. $\ji \iff \sji$), \pSkip
    \item $\spec$ is 1:1, \pSkip
    \item $\spec$ is a lattice isomorphism of $(L, \leq)$ and
    $\spec(L)$, with the induced order of $(\bool,
    \leq)^{\ss(L)}$.
\end{enumerate}

\end{proposition}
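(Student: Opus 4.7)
My plan is to establish the cyclic chain $(1) \Rightarrow (2) \Rightarrow (3) \Rightarrow (4) \Rightarrow (1)$; I read (2) as the conjunction of ``$\mi \iff \smi$'' and ``$\ji \iff \sji$'', the two clauses being dual. For $(1) \Rightarrow (2)$ I note that $\ji \Rightarrow \sji$ holds in every finite lattice (if $\ell$ is \ji \ and $\ell = \bigvee X$, then $\ell \leq \bigvee X$ yields some $x \in X$ with $\ell \leq x$, and $x \leq \bigvee X = \ell$ forces $\ell = x$), and the reverse $\sji \Rightarrow \ji$ uses distributivity: if $\ell$ is \sji \ and $\ell \leq \bigvee X$, then
$$ \ell \;=\; \ell \wedge \bigvee X \;=\; \bigvee_{x \in X}(\ell \wedge x), $$
so strict join-irreducibility forces $\ell = \ell \wedge x$ for some $x$, hence $\ell \leq x$. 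The dual distributive identity handles $\smi \iff \mi$ by the same argument.

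For $(2) \Rightarrow (3)$ I use the characterization derived just before Proposition~\ref{prop:ajoint},
$$\spec(\ell) = \spec(\ell') \dss \iff \pdn{\ell} \cap \{\ji \neq B\} = \pdn{\ell'} \cap \{\ji \neq B\}.$$
Because the $\sji$'s not $B$ form the unique minimal set of join generators of $L$, every element satisfies $\ell = \bigvee\bigl(\pdn{\ell} \cap \{\sji \neq B\}\bigr)$ (with the convention $\bigvee \emptyset = B$). Under (2) we have $\sji = \ji$, so this rewrites as $\ell = \bigvee\bigl(\pdn{\ell} \cap \{\ji \neq B\}\bigr)$, and the characterization then yields $\ell = \ell'$ from $\spec(\ell) = \spec(\ell')$, giving injectivity.

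For $(3) \Rightarrow (4)$, recall that $\spec$ was already shown (in the discussion preceding the proposition) to be a sup-inf-map, so $\spec(L)$ is a sublattice of $(\bool, \leq)^{\ss(L)}$ and $\spec : L \onto \spec(L)$ preserves both $\vee$ and $\wedge$. Combined with injectivity from (3),
$$\spec(\ell) \leq \spec(\ell') \iff \spec(\ell \wedge \ell') = \spec(\ell) \iff \ell \wedge \ell' = \ell \iff \ell \leq \ell',$$
so $\spec$ is an order isomorphism and hence a lattice isomorphism onto its image. Finally, $(4) \Rightarrow (1)$ is immediate: $(\bool, \leq)^{\ss(L)}$ is distributive as a direct product of the two-element distributive lattice, the sublattice $\spec(L)$ inherits distributivity, and the lattice isomorphism $L \cong \spec(L)$ carries it back to $L$.

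The main obstacle I anticipate is $(2) \Rightarrow (3)$: one must marry the abstract $\spec$-characterization (phrased in \ji's) with the concrete join-generation statement for $\sji$'s from universal algebra, and hypothesis (2) enters precisely at the point where one converts $\bigvee\bigl(\pdn{\ell} \cap \{\sji \neq B\}\bigr)$ into $\bigvee\bigl(\pdn{\ell} \cap \{\ji \neq B\}\bigr)$, which is exactly the form the characterization demands.
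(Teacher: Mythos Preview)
Your proof is correct and follows the same cyclic scheme $(1)\Rightarrow(2)\Rightarrow(3)\Rightarrow(4)\Rightarrow(1)$ as the paper, with the same underlying ideas. The only differences are cosmetic: the paper carries out $(1)\Rightarrow(2)$ and $(2)\Rightarrow(3)$ on the dual (\smi/\mi, meet-generation) side rather than your \sji/\ji, join-generation side, and for $(3)\Rightarrow(4)$ the paper invokes the Galois adjoint $\psi=\adjsup{\spec}$ from Proposition~\ref{prop:ajoint} to recover the order, whereas you use directly that $\spec$ was already declared a sup-inf-map and hence a bijective lattice homomorphism onto its image---your route here is a touch more direct.
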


\begin{proof}$(1) \Rightarrow (2)$: If $p$ is \smi \ and $p \geq a \wedge
b$ then, by distributivity,
$$ p = p \vee p  = (a \wedge b ) \vee p  = (a \vee p) \wedge  ( b \vee
p).$$ Then by \smi \ $p = a \vee p $, say  on $p \geq a$, and thus
$p$ is \mi. \pSkip

$(2) \Rightarrow (3)$: The \smi's meet generate, so the \mi's
$\ell$ meet generate $L$. Thus, if $\ell_1, \ell _2 \in L $, where
$\ell_1 \neq  \ell _2$, then there exits an \mi \ $m$  such that
$\ell_1 \leq m$ and $\ell_2 \not \leq m$ (since each $\ell$ is the
meet of all \mi's $\geq \ell$). Thus, $m$ is different on $\ell_1$
and $\ell_2$, and hence $\spec$ is 1:1. \pSkip

$(3) \Rightarrow (4)$: Set $\vrp:= \spec$. Since $\vrp$ is a
bijective sup-map, by Proposition \ref{prop:ajoint}, the adjoint
map $\psi : \spec(L) \onto L$ exists and it is a bijective
inf-map. To complete this part we need to show that $\vrp(\ell_1)
\leq \vrp(\ell_2)$ implies $\ell_1 \leq \ell_2$. But, by
Proposition \ref{prop:ajoint}, $\vrp(\ell_1) \leq \vrp(\ell_2)$
implies $(\psi \circ \vrp )(\ell_2) \geq \ell_2$ and since $\vrp$
and $\psi$ are bijections, $\ell_2 = (\psi \circ \vrp) (\ell_2)$,
by definition of $\psi$. \pSkip

$(4) \Rightarrow (1)$ As already remarked, $\spec(L)$ is a
distributive lattice, and so is $(L, \leq)$ by isomorphism.
\end{proof}

\begin{corollary}[Birkhoff] $(L, \leq)$ is a distributive lattice
iff is isomorphic to a collection of subsets of a finite set $Z$
of cardinality $\ss(L)$, closed under set theoretic union and
intersection, including $\emptyset$ and whole~$Z$.
\end{corollary}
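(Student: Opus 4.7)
The plan is to read this corollary off Proposition~\ref{prop3.3.4} with essentially no additional work, since Proposition~\ref{prop3.3.4}(4) already identifies a distributive $L$ with a sublattice of $(\bool, \leq)^{\ss(L)}$, and that latter lattice is precisely the power set of a set of size $\ss(L)$.

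For the $(\Rightarrow)$ direction, I will assume $(L, \leq)$ is distributive and invoke Proposition~\ref{prop3.3.4}(4) to obtain a lattice isomorphism $\spec: (L, \leq) \to \spec(L) \subseteq (\bool, \leq)^{\ss(L)}$. I then identify $(\bool, \leq)^{\ss(L)}$ with $(2^Z, \subseteq)$ for a set $Z$ of cardinality $\ss(L)$, via the standard bijection sending a boolean $\ss(L)$-tuple to its support. Under this identification $\mathcal{F} := \spec(L)$ sits inside $2^Z$; since $\spec$ is a sup-inf-map, $\mathcal{F}$ is closed under the join $\cup$ and meet $\cap$ of $2^Z$, and since $\spec$ preserves the empty join and empty meet, $\spec(B) = \emptyset$ and $\spec(T) = Z$ both lie in $\mathcal{F}$.

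For the $(\Leftarrow)$ direction, I will suppose $(L, \leq)$ is lattice isomorphic to some $\mathcal{F} \subseteq 2^Z$ closed under $\cup$ and $\cap$ and containing both $\emptyset$ and $Z$. Then $\mathcal{F}$ is a sublattice of $2^Z$; since $2^Z$ is distributive and any sublattice of a distributive lattice is itself distributive (as already noted in the text just before this corollary), $\mathcal{F}$, and hence $L$, is distributive.

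I do not expect a genuine obstacle. The one point requiring care is orientation: to make the induced map $L \to 2^Z$ order-preserving — so that $B \mapsto \emptyset$ and $T \mapsto Z$ rather than the reverse — I will use the \ji\ formulation $\gvrp_\ell$ of the spec coordinates, giving $\spec(\ell) = \pdn{\ell} \cap \{\ji \neq B\}$, which the discussion preceding Proposition~\ref{prop3.3.4} supplies as an equivalent form of $\spec$.
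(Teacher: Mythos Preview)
Your proposal is correct and follows the same route as the paper: the paper gives no explicit proof, treating the corollary as immediate from Proposition~\ref{prop3.3.4}, and your argument simply spells out why --- identifying $(\bool,\leq)^{\ss(L)}$ with the power set of a $\ss(L)$-element set and invoking Proposition~\ref{prop3.3.4}(4) for the forward direction, together with the already-remarked fact that sublattices of distributive lattices are distributive for the converse. Your care about orientation (using the $\gvrp_\ell$ form so that $B\mapsto\emptyset$ and $T\mapsto Z$) is appropriate and resolves the only subtlety.
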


\begin{corollary} $\spec(L)$ is the unique maximal sup-inf image
of $L := (L , \leq)$ which is a distributive lattice. $\spec(L)$
is generated by $\ss(L) $ elements, which is also the number of
\smi's = \mi's not $T$ (also is the number of $\sji's = \ji's$ not
$B$).
\end{corollary}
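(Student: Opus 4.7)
The plan is to prove three claims: (i) every sup-inf surjection $\vrp : L \onto M$ onto a distributive lattice $M$ factors through $\spec(L)$; (ii) such a factorization is essentially unique, so $\spec(L)$ is the maximal distributive sup-inf image of $L$; and (iii) $\spec(L)$ is generated by $\ss(L)$ elements, with $\ss(L)$ equal to $\miNoT(L)$ and to $\jiNoB(L)$, and also equal to $\smiNoT$ and $\sjiNoB$ computed in $\spec(L)$ because $\spec(L)$ is distributive.

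For (i), the key input is Proposition \ref{prop3.3.4}: since $M$ is distributive, every $\mi \neq T$ element $m$ of $M$ is also $\smi$, hence the characteristic map $\lvrp_m : M \onto (\bool, \leq)$ from (\ref{eq:sup-map}) is a sup-inf surjection. Composition with $\vrp$ gives a sup-inf surjection $\lvrp_m \circ \vrp : L \onto (\bool, \leq)$, so by the characterization immediately following (\ref{eq:sup-map}) it coincides with $\lvrp_{\ell_m}$ for a unique $\mi \neq T$ element $\ell_m \in L$. Assembling these identifications defines a map $\bar\vrp : \spec(L) \to M$ that reads the $m$-coordinate of $\bar\vrp(\spec(\ell))$ as the $\ell_m$-coordinate of $\spec(\ell)$, and by construction $\vrp = \bar\vrp \circ \spec$. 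Well-definedness follows from the equivalence $\spec(\ell_1) = \spec(\ell_2) \iff \pup{\ell_1} \cap \{\mi \neq T\} = \pup{\ell_2} \cap \{\mi \neq T\}$ recorded just before Proposition \ref{prop:ajoint}, and sup-inf-ness of $\bar\vrp$ follows from that of $\vrp$ together with the surjectivity and sup-inf-ness of $\spec$ (by lifting joins and meets from $\spec(L)$ back to $L$).

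For (ii), uniqueness is forced by the prescription $\lvrp_m \circ \vrp = \lvrp_{\ell_m}$, which depends only on $m$ and $\vrp$; maximality then follows because the $\mi$-data of $M$ is necessarily drawn from the $\mi$-data of $L$. For (iii), the equality $\miNoT(L) = \jiNoB(L)$ is the 1:1 correspondence $\bigvee \ivrp(0) \leftrightarrow \bigwedge \ivrp(1)$ established just before (\ref{eq:s-map}), while the equalities with $\smiNoT$ and $\sjiNoB$ hold inside $\spec(L)$ by Proposition \ref{prop3.3.4}. Finally, the Birkhoff corollary above realizes any distributive lattice as a collection of subsets of a set of cardinality $\ss(L)$, hence meet-generated by its \mi's and dually join-generated by its \sji's, each of cardinality $\ss(L)$.

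The main obstacle will be verifying the well-definedness of $\bar\vrp$, namely that $\vrp(\ell_1) \neq \vrp(\ell_2)$ implies $\spec(\ell_1) \neq \spec(\ell_2)$. This uses Proposition \ref{prop3.3.4} inside the distributive $M$ to produce an $\mi \neq T$ element $m$ of $M$ with $\lvrp_m(\vrp(\ell_1)) \neq \lvrp_m(\vrp(\ell_2))$, and then the identity $\lvrp_m \circ \vrp = \lvrp_{\ell_m}$ shows that the $\ell_m$-coordinates of $\spec(\ell_1)$ and $\spec(\ell_2)$ disagree. Once this separation is secured, the remaining checks reduce to routine coordinate-wise computations inside $\bool^{\ss(L)}$.
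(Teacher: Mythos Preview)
Your proof is correct and follows essentially the same approach as the paper's. Both arguments take an arbitrary sup-inf surjection onto a distributive lattice, invoke Proposition~\ref{prop3.3.4} on the target, and then argue that the map factors through $\spec(L)$; the paper compresses this into the single line ``$(\spec \circ \psi)(L)$ factors through $L \onto \spec_L(L)$,'' whereas you unpack the factorization coordinate-by-coordinate via the identification $\lvrp_m \circ \vrp = \lvrp_{\ell_m}$, which is exactly the content hidden in that line.
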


\begin{proof} $\spec(L)$ is a distributive lattice, image of
sup-inf map of $(L , \leq).$  To see the unique maximality,
applying  Proposition \ref{prop3.3.4}.(3) to any such image
$\tlL$,
$$\xymatrix{
L \ar @{->>}[r]^\psi  & \tlL \ar@{->>}[r]^\spec_{\iso} &
\spec(\tlL),
 }$$
we see that $(\spec \circ \; \psi) (L) $ factors through $L \onto
\spec_L(L)$, and the rest of the proof follows from Proposition~
\ref{prop3.3.4}.
\end{proof}

\begin{remark}\label{rmk.3.4.7} If we endow $\{0,1 \}$ with the
Sierp\'{i}nski topology (not $T_2$) in which the closed sets are
$\{ \emptyset, \{ 0\}, \{0,1 \} \} $, then the null-kernel
topology is induced topology of the poset topology.  Thus, the
closed sets of $\spec(L)$ are of the form $V (\ell)$, for $\ell
\in L$,  with
$$V(\ell) := \{ m \in \spec(L) \ds | m \geq \ell\}. $$
See \cite[\S7]{qtheory}.
\end{remark}

\begin{example}\label{exmp.3.4.7}
Let $L := (L,\leq)$ be the following  lattice, $|L| = 5$,
\begin{equation}\label{eq:poset} \footnotesize   {\xymatrix{
 & & T \ar@{-}[dr] \ar@{-}[dl]& \\ & 2 \ar@{-}[d] & & 3 \ar@{-}[ddl]\\
& 1 \ar@{-}[dr] & & \\
&  & B &
 }}
\end{equation}
The \mi's not $T$ are $2$  and $3$, so the relations on $L$ given
by $\spec$ are the singletons and $\{1,2\}$.
%\beginA
 $$  \footnotesize  {\xymatrix{
 & & \emptyset  \ar@{-}[dr] \ar@{-}[dl]& %%
 & & & T \ar@{-}[dr] \ar@{-}[dl] &
 \\ & \{ 2 \} \ar@{-}[d] & & \{ 3 \} \ar@{-}[ddl] \ar@{->>}[rr]^{\spec} &   & \{2 ,3\} & & \{3\}\\
& \{ 2\} \ar@{-}[dr] & & & & & B \ar@{-}[ur] \ar@{-}[ul] &\\
&  & \{ 2,3\} &
 }}$$
%\endA
Thus, for $L$ we have $\sjiNoB(L) = \smiNoT (L) = 2$, $\jiNoB(L) =
3$, and $\ss(L) = 2.$
\end{example}

\begin{example} Consider the semilattice $\Lm_{2n}$ in Example
\ref{exmp:3.4.1}. Then, $\ss(\Lm_{2n}) = 0 $, $\spec( \Lm_n) = \{
0 \} $.

\end{example}
\section{Dimension and related functions for finite lattices}

In this section we develop the theory of {\nook} of  finite
lattices, along with methods of computation and relations to other
lattice functions studied earlier.

%$$ --------------------------$$
%$$ Pages \ 37-38 $$
%$$ --------------------------$$

\subsection{Duality of boolean modules}\label{ssec:4.2} (See \cite[Propositions
9.1.12-13]{qtheory}.) Given a finite  $\bool$-module $M := M
(\bool, +),$ we define the dual module $$ M^* := \{ \vrp: M  \to
\bool \ds | \vrp \text{ is a sup-map} \} ,$$ consisting of all
sup-maps over $M$.
%  , equivalently all the onto morphism $f: M \onto \bool$.
Then, $M^*$ itself is a $\bool$-module closed under
addition, i.e., satisfying $(f_1 + f_2)(m) = f_1(m) + f_2(m)$ for
every $f_1, f_2 \in M^*$, whose unit is the constant zero mapping
$f_0 : m \mapsto 0$ for every $m \in M$.

\begin{remark}\label{rmk:4.2.a}
Clearly, for any $\ell \in L$, the sup-map $\lvrp_{\ell} : (L
,\leq) \to (\bool, \leq)$, cf.  Eq. \eqref{eq:sup-map}, is
contained in $\dlM$ and the map $\chi: \ell \mapsto \lvrp_{\ell}$
is bijection, as is easy to prove. \end{remark}

Having this view, we deduce that if the $\bool$-module $M$ is
considered as a lattice $(M , \leq)$, then the dual module $\dlM$
is the lattice $\rvs{M} := (M, \geq)$ obtained by reversing the
order of $M$ (realized as a lattice).

Similarly to  the $\spec $ map \eqref{eq:spec}, given for
lattices, for a finite $\bool$-module $L := (M, \leq)$, realized
as a lattice, we define the map
\begin{equation}\label{eq:} \cc_L :=
 \Dl \bigg( \bigotimes_{\ell \in L} \lvrp_{\ell} \bigg):  (L ,\leq)
    \ds  \to  \bool^{|L|},
\end{equation}
a sup embedding of $L$ into $\bool^{|L|}$. So as in the spec case,
$L$ is order isomorphic to $\cc(L)$ which is a subset of
$\bool^{|L|}$ closed under all joins. (Note that the meets of
$\cc(L)$ need not be the meets of $\bool^{|L|}$.)

Given the boolean module $M := M(\bool, + )$, and consider $\cc(M)
\subseteq \bool^{|L|}$ realized as an $|L| \times |L|$ matrix~ $C$
with coefficients in $\bool$ and whose rows are the members of
$\cc(M)$. Then,  one could define $\nk{M}$, the \nook \ of $M$, to
be the matrix rank $\rnk{C}$. This is the same  as in Definition
\ref{def:nook}, since $\cmp{A}$ \emph{equals the matrix}~ $C$.
Thus, the map $A \to \cmp{A}$  for the adjacency matrix  $A:=
A(L)$ of $(L, \leq) $ is given by passing from $L$ to $\cc(L)$ as
in Remark \ref{rmk:4.2.a}. This is the same view as in the
important \cite{IJmatII}.

\subsection{Pullback and push of independent subsets} The following notion
provides an important property of independence of subsets of
lattices.

\begin{definition}
Given a lattice map $\vrp: (L, \leq) \to (L', \leq)$,  we say that
an element $\ell \in L$ is a \textbf{pullback} of $\ell' \in L'$
if $\vrp(\ell) = \ell'$. \end{definition}

\begin{lemma} \label{lem:4.8.a} Let $(L, \leq)$ and $(L', \leq)$
be finite lattices and let $\vrp: (L, \leq) \onto (L', \leq)$ be
a  sup-map of~ $L$ to~ $L'$. Assume $\ell'_1, \dots, \ell'_k$ are
independent elements of~ $L'$ and,  taking one representative
$\ell_i$ for each $\vrp^{-1}(\ell'_i)$, let $\ell_1, \dots, \ell_k
\in L$ be their pullbacks. Then, $\ell_1, \dots, \ell_k$ are
independent in~ $L$.
\end{lemma}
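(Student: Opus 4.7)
The plan is to lift a witness from $L'$ to $L$ using the adjoint inf-map $\psi := \adjsup{\vrp}: L' \to L$ provided by Proposition~\ref{prop:ajoint}. Since $\{\ell'_1,\ldots,\ell'_k\}$ is independent, Theorem~\ref{thm:rnkSing} yields a witness $\{m'_1,\ldots,m'_k\} \subseteq L'$, and by Lemma~\ref{lem:2.1.f} I may independently permute the rows and columns so that the submatrix $\rwcl{\cmp{A(L')}}{\{m'_i\}}{\{\ell'_i\}}$ is in the triangular form \eqref{eq:trgform}; translated to the lattice, this reads $m'_i \not\leq \ell'_i$ and $m'_i \leq \ell'_j$ for all $i < j$. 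I then carry this configuration across $\vrp$ through the adjoint.

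Concretely, for each $i$ I would set
\[
m_i \; := \; \psi(m'_i) \,\wedge\, \ell_{i+1} \,\wedge\, \cdots \,\wedge\, \ell_k,
\]
with the convention $m_k := \psi(m'_k)$. By construction $m_i \leq \ell_j$ for every $j > i$, so the candidate submatrix $\rwcl{\cmp{A(L)}}{\{m_i\}}{\{\ell_i\}}$ has zero entries above the diagonal. The diagonal entries should equal $1$, i.e., $m_i \not\leq \ell_i$; I intend to derive this from the identity $\vrp(m_i) = m'_i$, because if $m_i \leq \ell_i$ then $m'_i = \vrp(m_i) \leq \vrp(\ell_i) = \ell'_i$, contradicting $m'_i \not\leq \ell'_i$. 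Once triangularity is verified, Lemma~\ref{lem:2.1.f} renders the submatrix nonsingular and Theorem~\ref{thm:rnkSing} delivers c-independence of $\{\ell_1,\ldots,\ell_k\}$ in $L$.

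The main obstacle is establishing $\vrp(m_i) = m'_i$. The inequality $\vrp(m_i) \leq m'_i$ is immediate from $m_i \leq \psi(m'_i)$ and $\vrp \circ \psi = \id_{L'}$. The reverse is delicate because $\vrp$ need not preserve meets; the key is to exhibit an element in $\vrp^{-1}(m'_i) \cap \pdn{\ell_{i+1}} \cap \cdots \cap \pdn{\ell_k}$, whose existence forces $\vrp(m_i) \geq m'_i$. Since $\vrp$ is a surjective sup-map, each fiber $\vrp^{-1}(m'_i)$ is closed under joins with maximum $\psi(m'_i)$; combining this with the hypothesis $m'_i \leq \ell'_j = \vrp(\ell_j)$ for $j > i$ and the adjoint characterization $\vrp(x) \leq y' \iff x \leq \psi(y')$, I expect to show that the supremum $\bigvee\bigl(\vrp^{-1}(m'_i) \cap \pdn{\ell_{i+1}} \cap \cdots \cap \pdn{\ell_k}\bigr)$ lies in this intersection (using join-closedness of fibers and of down-sets) and hence lies below $m_i$, which gives $m'_i \leq \vrp(m_i)$.
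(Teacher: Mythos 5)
Your overall strategy is the paper's: lift the witness from $L'$ to $L$ through the adjoint inf-map $\psi=\adjsup{\vrp}$ of Proposition~\ref{prop:ajoint}. But the execution has a genuine gap at the diagonal entries, and the root cause is the orientation you chose for the triangular form. The adjunction transfers inequalities in one direction only: $\vrp(\ell)\leq m' \iff \ell\leq\psi(m')$. It does \emph{not} yield ``$m'\leq\vrp(\ell)\Rightarrow\psi(m')\leq\ell$'', and in particular it does not guarantee that the fiber $\vrp^{-1}(m')$ contains any element below $\ell$ when $m'\leq\vrp(\ell)$. That is exactly the unproven claim your argument rests on: you need $\vrp(m_i)=m'_i$, i.e.\ $\vrp^{-1}(m'_i)\cap\pdn{\ell_{i+1}}\cap\cdots\cap\pdn{\ell_k}\neq\emptyset$, and this can fail. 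Concretely, let $L=\{B,a,b,c,T\}$ with $a,b$ incomparable, $a\vee b=c<T$, let $L'$ be the chain $0<m<u<1$, and let $\vrp$ be the surjective sup-map with $\vrp(B)=0$, $\vrp(a)=m$, $\vrp(b)=\vrp(c)=u$, $\vrp(T)=1$. In $L'$ the pair $\ell'_1=0$, $\ell'_2=u$ is independent with witness $m'_1=m$, $m'_2=1$ in your normal form ($m\not\leq 0$, $1\not\leq u$, $m\leq u$). Choosing the pullbacks $\ell_1=B$ and $\ell_2=b$, you get $\psi(m'_1)=a$ and $m_1=a\wedge b=B\leq\ell_1$, so your candidate submatrix has a $0$ on the diagonal and is singular; indeed $\vrp(m_1)=0\neq m$, because $\vrp^{-1}(m)=\{a\}$ contains nothing below $b$. (The conclusion of the lemma still holds there --- $\{b,a\}$ witnesses the independence of $\{B,b\}$ --- but your construction does not produce a witness.)

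The repair is to read the nonsingular witness with the roles of the two index sets interchanged, as the paper does in the proofs of Theorem~\ref{thm:4.9} and of this lemma: normalize so that $\ell'_j\not\leq m'_j$ on the diagonal and $\ell'_i\leq m'_j$ for $i<j$. With that orientation the plain definition $m_i:=\psi(m'_i)$, with no extra meets, finishes the proof at once, because Proposition~\ref{prop:ajoint}(2) transfers every entry verbatim: $\ell_i\leq\psi(m'_j)\iff\vrp(\ell_i)=\ell'_i\leq m'_j$, which holds for $i<j$ and fails for $i=j$. This is precisely the paper's argument; your meet with $\ell_{i+1}\wedge\cdots\wedge\ell_k$ is an attempt to compensate for the unfavorable orientation, and as the example shows it cannot be completed.
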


\begin{proof} Let $A := A(L) $  and $A' := A(L')$  be respectively
the matrix structure of $L$ and $L'$. Let $W' = \{ \ell'_1, \dots,
\ell'_k \} \subseteq L'$, and let $\rwcl{\cmp{A'}}{U'}{W'}$ be a
witness of $W'$ for some $U' := \{ m'_1, \dots, m'_k \}$. Assume
$\vrp$ is as in Proposition \ref{prop:ajoint} and let
$$\psi := \adjsup{\vrp} : (L', \leq) \ds \to (L, \leq) ,$$ cf. Eq.
\eqref{eq:gg}. Define $m_i := \psi(m'_i)$ for each $i = 1,\dots,
k$, and let $U := \{m_1, \dots, m_k \} $. We claim that
$\rwcl{\cmp{A}}{U}{W}$ is a witness of $W := \{ \ell_1, \dots,
\ell_k\} $ in $L. $ Indeed, permuting the columns of $\cmp{A}$, by
Lemma \ref{lem:2.1.f}, we may assume that $\rwcl{\cmp{A}}{U}{W}$
is of the Form \eqref{eq:trgform}.

If $\ell_j \leq m_j$, then applying $\vrp$ -- a sup-map -- we get
$\ell'_j \leq m'_j$ which is  false. Thus, $ \ell_j \nleqslant
m_j$, and we need to show that $\ell_i \leq m_j$ for $i < j$. But
$\vrp(\ell_i) = \ell'_i \leq \vrp(m_j) = m'_j$,    by Proposition
\ref{prop3.3.4}.
\end{proof}

\begin{corollary}\label{cor:4.8.b} $ $
\begin{enumerate} \eroman
    \item If $\vrp: (L, \leq) \onto (L', \leq)$ is an onto sup-map,
    then $\nk{L} \geq \nk{L'},$ \pSkip

    \item If $(L', \leq)$ is a sub-module of  $(L, \leq)$, i.e., a
    subset of $L'$ closed under join, then  $\nk{L} \geq \nk{L'}.$
\end{enumerate}

\end{corollary}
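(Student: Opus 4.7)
The plan is to derive both parts as immediate consequences of earlier material, with Lemma~\ref{lem:4.8.a} doing essentially all the work for part (i) and a direct matrix comparison handling part (ii).

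For part (i), I would argue as follows. Set $k := \nk{L'}$, so by definition there is an independent subset $W' = \{\ell'_1, \dots, \ell'_k\} \subseteq L'$. Since $\vrp$ is onto, every fibre $\vrp^{-1}(\ell'_i)$ is nonempty, so we may select a pullback $\ell_i \in \vrp^{-1}(\ell'_i)$ for each $i$. Lemma~\ref{lem:4.8.a} then guarantees that $\ell_1, \dots, \ell_k$ are independent in $L$, whence $\nk{L} \geq k = \nk{L'}$. This step is essentially a restatement of the lemma, so there is no real obstacle.

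For part (ii), I read the hypothesis as: $L'$ is a subset of $L$ closed under the join of $L$ (the statement as written contains an evident typo). The cleanest approach is to compare structure matrices directly, rather than trying to construct a surjective sup-map $L \onto L'$ (the naive candidate $\ell \mapsto \bigvee\{\ell' \in L' : \ell' \leq \ell\}$ is not in general a sup-map, so reducing (ii) to (i) in this way is not automatic). Because the partial order on $L'$ is inherited from that on $L$, the structure matrix $A(L')$ is precisely the submatrix of $A(L)$ obtained by restricting both the row and column indices to $L'$; consequently $\cmp{A(L')}$ is the corresponding submatrix of $\cmp{A(L)}$. Given any independent subset $W \subseteq L'$ with witness $U \subseteq L'$ (cf.\ Theorem~\ref{thm:rnkSing} and the discussion following Definition~\ref{def:nook}), the submatrix $\rwcl{\cmp{A(L')}}{U}{W}$ is nonsingular; but this is literally the same submatrix as $\rwcl{\cmp{A(L)}}{U}{W}$, so $W$ is independent as a subset of $L$ as well. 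Therefore $\nk{L} \geq \nk{L'}$.

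The only subtle point — and the mild obstacle to be careful about — is to confirm that the matrix identity used in (ii) really does hold: this requires only that $L'$ inherit the order from $L$, and does not genuinely use closure under join. The hypothesis that $L'$ is a sub-module (closed under join) is what makes it meaningful to speak of $\nk{L'}$ as the c-rank of a lattice/join-semilattice in its own right, rather than of an arbitrary induced subposet. With that observed, both parts follow with essentially no further computation.
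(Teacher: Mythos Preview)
Your proof of part~(i) matches the paper's exactly: both invoke Lemma~\ref{lem:4.8.a} directly.

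For part~(ii) the approaches diverge. The paper's proof reads in full ``Immediate by part~(i)'', with no indication of which surjective sup-map $L \onto L'$ is intended. You take a different route, comparing the structure matrices directly: since $L'$ carries the induced order, $\cmp{A(L')}$ is the submatrix of $\cmp{A(L)}$ on the indices $L' \times L'$, so any witness for an independent set in $L'$ is already a witness in $L$. This is correct and self-contained. Your observation that the obvious candidate $\ell \mapsto \bigvee\{\ell' \in L' : \ell' \leq \ell\}$ need not be a sup-map is on point (e.g.\ take $L$ to be the five-element lattice $M_3$ with atoms $a,b,c$ and $L' = \{B,c,T\}$; then the map sends $a,b$ to $B$ but $a \vee b = T$ to $T$), so the paper's one-line reduction is at best elliptical. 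Your direct argument is the cleaner of the two, and as you note it actually uses only that the order on $L'$ is inherited, not the closure under join.
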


\begin{proof} (i): Follows from Lemma \ref{lem:4.8.a}.
\pSkip  (ii): Immediate by part (i).
\end{proof}

We say that a finite $\bool$-module $M'$ \textbf{divides} a
$\bool$-module $M$, written $M' < M$, iff $M'$ is the image of a
sup-map of a sub-module of $M$. Accordingly, $M' < M$ implies
$\nk{M'} < \nk{M}.$

\begin{theorem}\label{thm:4.9} For any finite lattice $L = (L,
\leq)$ we have the equality  $\nk{L} = \hgt{L}$.
\end{theorem}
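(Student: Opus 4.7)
The plan is to establish $\nk{L}=\hgt{L}$ by proving both inequalities via the triangular form of a witness (Lemma~\ref{lem:2.1.f}), which is the natural bridge between c-independence in $\cmp{A(L)}$ and strict chains in~$L$. For the inequality $\nk{L}\geq\hgt{L}$, I fix a strict chain $p_0<p_1<\cdots<p_h$ realizing $h=\hgt{L}$ and will exhibit $W:=\{p_0,\ldots,p_{h-1}\}$ as c-independent, witnessed by $U:=\{p_1,\ldots,p_h\}$; indeed, ordering columns by $\ell_s:=p_{s-1}$ and rows by $m_s:=p_s$, the submatrix $\rwcl{\cmp{A}}{U}{W}$ is already in the triangular form \eqref{eq:trgform}, since the diagonal condition $p_s\not\leq p_{s-1}$ and the above-diagonal condition $p_s\leq p_{t-1}$ for $s<t$ are immediate from the strict chain.

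The reverse inequality $\nk{L}\leq\hgt{L}$ is the main content. I take an independent subset $W=\{\ell_1,\ldots,\ell_k\}$ with $k=\nk{L}$ and a witness $U=\{m_1,\ldots,m_k\}$ arranged into the triangular form via Lemma~\ref{lem:2.1.f}, so $m_s\not\leq\ell_s$ and $m_s\leq\ell_t$ whenever $s<t$. The idea is to convert the witness into a strict chain of lattice meets: set $d_i:=\bigwedge_{j=i}^{k}\ell_j$ for $i=1,\ldots,k$ with the convention $d_{k+1}:=T$, which yields the weak chain $d_1\leq d_2\leq\cdots\leq d_{k+1}$. Strictness at step $i$ is equivalent to $d_{i+1}\not\leq\ell_i$, and this is exactly what row $m_i$ certifies: the above-diagonal relations give $m_i\leq\ell_j$ for all $j>i$ and hence $m_i\leq d_{i+1}$, while the diagonal relation $m_i\not\leq\ell_i$ then forces $d_{i+1}\not\leq\ell_i$ (otherwise $m_i\leq d_{i+1}\leq\ell_i$). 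Consequently $d_1<d_2<\cdots<d_{k+1}=T$ is a strict chain of length $k$ in $L$, proving $\hgt{L}\geq k$.

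The main obstacle is identifying the right chain to extract from the witness; once the descending meets $d_i$ are recognized as the natural candidates, the verification is immediate from the triangular pattern of $\rwcl{\cmp{A}}{U}{W}$, and no additional lattice machinery --- such as $\sjiNo$ or Corollary~\ref{cor:4.7} --- is required for this argument.
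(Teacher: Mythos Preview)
Your proof is correct and follows essentially the same two-step strategy as the paper: a strict chain yields a c-independent set via a one-step shift between the set and its witness, and conversely a c-independent set with witness in triangular form produces a strict chain of length $k$ by taking successive tail meets and using the other family to separate consecutive terms. The only cosmetic difference is that you form the chain from tail meets $d_i=\bigwedge_{j\ge i}\ell_j$ of the independent elements and separate them with the witness rows $m_i$, whereas the paper forms the chain from tail meets $\bigwedge_{j\ge i} m_j$ of the witness and separates with the $\ell_i$ --- the same construction applied to the transposed witness.
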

\begin{proof} If $0 < \ell_1 < \cdots < \ell_k$ is a chain in $L$,
then $ \ell_1, \dots, \ell_k$ are independent with witness $U :=
\{ m_1, \dots, m_k\}$, $m_1 = B$, $m_2 = \ell_1, \dots, m_k =
\ell_{k-1}$. Thus, $\nk{L} \leq k \leq \hgt{L}$.

Suppose $W := \{ \ell_1, \dots, \ell_m \} $ are independent with a
witness $\rwcl{\cmp{A}}{U}{W}$, $ U:= \{ m_1, \dots, m_k \}$, of
the Form~\eqref{eq:trgform}. Then the chain
\begin{equation}\label{eq:4.9.a}
    m_1 \wedge m_2 \wedge \cdots \wedge m_k \ds \leq m_2 \wedge m_3 \wedge \ds \cdots \wedge m_k \ds \leq
    \cdots  \ds \leq m_{ k-1 }  \wedge m_k \ds \leq m_k \ds \leq T
\end{equation}
is a strict chain in $L$, since $\ell_1, \dots, \ell_{k-1} \leq
m_k$, $\ell_k \not \leq m_k$ then
\begin{equation}\label{eq:4.9.b}
\begin{array}{rclcrcl}
    \ell_1,  \dots, \ell_{k-2}  & \leq &  m_{k-1} \wedge m_k, & \quad &
    \ell_{k-1},
    \ell_k  & \not \leq &  m_{k-1} \wedge m_k, \\[2mm]
    \ell_1, \dots, \ell_{k-3}  & \leq &  m_{k-2} \wedge m_{k-1} \wedge m_k, & \quad &
    \ell_{k-2}, \ell_{k-1},
    \ell_k  & \not \leq &  m_{k-1} \wedge m_k, \\[2mm]
    \qquad \vdots & &  \qquad \vdots & &  \vdots \qquad & &  \qquad \vdots \\[2mm]
      \ell_1 & \leq &  m_2 \wedge \cdots \wedge m_{k-1} \wedge m_k,
      & \quad &
    \ell_2, \dots,  \ell_{k-1},
    \ell_k  & \not \leq  & m_2 \wedge \cdots \wedge m_{k-1} \wedge m_k.
\end{array}
\end{equation}
and $\ell_1 \not \leq  m_1 \wedge \cdots \wedge m_{k}$. Thus,
$\nk{L} \leq k \leq \hgt{L}.$
\end{proof}

\begin{remark} By Theorem \ref{thm:4.9},  we see that $\bool^{(n)} = \bool \oplus \cdots  \oplus \bool$ has rank
$n$
\end{remark}

%\subsection{Calculating independent sets}

%\subsection{Push  of independent sets}

Theorem \ref{eq:4.9.a} shows how to compute the \nook \ of a given
lattice, but we also want a way to compute independent subsets. To
do so we need the following notion:
\begin{definition}\label{def:push}
Let $L:= (L, \leq)$ be a finite lattice, and let $U := \{ m_1,
\dots, m_k \}$ be a witness of $W : = \{ \ell_1, \dots, \ell_k \}
$. A subset  $\tlW : = \{ \tlell_1, \dots, \tlell_k \} \subseteq
L$ with $\tlell_i \leq \ell_i$ and $\tlell_i \not \leq m_i$ for
every $i = 1, \dots, k$ is called a \textbf{push} of $W$ with
respect to $U$.
\end{definition}
\begin{proposition}[``Pushing''] \label{prop:4.10.b} A push of an
independent subset $W$ with witness $U$ is independent  with the
same witness.
\end{proposition}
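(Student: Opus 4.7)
My approach is to verify directly that the $k \times k$ submatrix of $\cmp{A}$ whose columns are indexed by $\tlW$ and whose rows are indexed by $U$ retains the triangular shape that already witnessed the independence of $W$. By Lemma~\ref{lem:2.1.f} we may assume, after independently permuting rows and columns and relabeling so that $m_i$ is paired with $\ell_i$, that $\rwcl{\cmp{A}}{U}{W}$ is in the form \eqref{eq:trgform}. Unpacking this in poset language exactly as in the proof of Theorem~\ref{thm:4.9}, the witness encodes two facts: first, $\ell_j \not\leq m_j$ for every $j$, coming from the diagonal $1$'s of $\cmp{A}$; and second, $\ell_i \leq m_j$ for all $i < j$, coming from the zeros strictly above the diagonal of $\cmp{A}$.

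I would then check that both pieces of data remain valid when each $\ell_i$ is replaced by its pushed counterpart $\tlell_i$. The diagonal relation $\tlell_j \not\leq m_j$ is built directly into Definition~\ref{def:push}, so the $1$'s on the diagonal of $\rwcl{\cmp{A}}{U}{\tlW}$ persist. For the strictly above-diagonal entries, transitivity of $\leq$ yields $\tlell_i \leq \ell_i \leq m_j$ for every $i < j$, keeping those entries of $\cmp{A}$ equal to $0$. Entries strictly below the diagonal are unconstrained in \eqref{eq:trgform}, so nothing need be verified there. Consequently $\rwcl{\cmp{A}}{U}{\tlW}$ is itself in triangular form, hence nonsingular by Lemma~\ref{lem:2.1.f}, which is exactly the statement that $\tlW$ is $c$-independent with witness $U$.

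There is no real obstacle; once the witness is in triangular form the proposition reduces to a one-line transitivity check. The only thing to be careful about is aligning the definition of push with the two triangular constraints: the condition $\tlell_i \not\leq m_i$ is precisely the diagonal non-comparability required to keep the $1$'s, while the downward inequality $\tlell_i \leq \ell_i$ is precisely what transitivity needs in order to preserve the above-diagonal relations $\ell_i \leq m_j$. No further lattice axioms beyond transitivity are invoked, so the conclusion goes through for any push without additional structural hypotheses on $L$.
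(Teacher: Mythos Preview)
Your proposal is correct and is precisely what the paper intends: its proof is the single sentence ``Clear, since $U$ is a witness of $\tlW$ as well,'' and you have simply spelled out why---the diagonal condition $\tlell_i \not\leq m_i$ is part of Definition~\ref{def:push}, and the above-diagonal zeros survive by transitivity from $\tlell_i \leq \ell_i \leq m_j$. There is no difference in approach, only in level of detail.
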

\begin{proof}
Clear, since $U$ is a witness of $\tlW$ as well.
\end{proof}

\begin{proposition}\label{prop:4.10.a} Let $L := (L, \leq)$ be  a
finite lattice. The independent subsets of $L$  are exactly pushes
of chains of $L$. That is, if $W := \{ \ell_1, \dots, \ell_k\}$
are independent with  witness $U: =\{ m_1, \dots, m_k\}$, then, is
an the proof Theorem \ref{thm:4.9},
$$ \tlm_1 <  \tlm_2 <  \cdots <  \tlm_k < T, \qquad \tlm_j = m_j \wedge \cdots \wedge
m_k,
$$
is a strict chain. So
$$ \tlell_ 2 <  \tlell_3 <  \cdots <  \tlell_k <  T,$$ is an independent
set with witness $\tlm_1, \dots, \tlm_k$, and $\ell_1, \dots,
\ell_k$ is a push of this chain.
\end{proposition}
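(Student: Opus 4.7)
The plan is to reduce everything to the chain argument already carried out inside the proof of Theorem~\ref{thm:4.9}. The forward implication, that every push of a chain is independent, is immediate from Proposition~\ref{prop:4.10.b} once one notes (from the first paragraph of the proof of Theorem~\ref{thm:4.9}) that any chain is itself an independent subset. So the real content lies in the converse direction: exhibiting, for a given independent subset, an explicit chain of which it is a push.

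Starting from an independent set $W = \{\ell_1, \dots, \ell_k\}$ with witness $U = \{m_1, \dots, m_k\}$, I would first apply Lemma~\ref{lem:2.1.f} to put $\rwcl{\cmp{A}}{U}{W}$ in the triangular Form~\eqref{eq:trgform}. This translates to $\ell_i \leq m_j$ for all $i < j$ and $\ell_j \not\leq m_j$ for every $j$. Next I set $\tlm_j := m_j \wedge m_{j+1} \wedge \cdots \wedge m_k$; the strict chain $\tlm_1 < \tlm_2 < \cdots < \tlm_k < T$ is exactly the content of the inequalities displayed in~\eqref{eq:4.9.b} during the proof of Theorem~\ref{thm:4.9}, so no fresh work is needed there.

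From this strict chain I would designate $C := \{\tlm_2, \tlm_3, \dots, \tlm_k, T\}$ as a chain of $k$ elements and check that $\{\tlm_1, \tlm_2, \dots, \tlm_k\}$ serves as its witness in $\cmp{A}$: the below-diagonal conditions $\tlm_{j+1} \leq \tlm_i$ for $j < i$ follow because $C$ is increasing, while the diagonal conditions $\tlm_{i+1} \not\leq \tlm_i$ (and $T \not\leq \tlm_k$) follow because the chain is strict. To conclude that $W$ is a push of $C$ with respect to this witness, I verify the two clauses of Definition~\ref{def:push}: the inequalities $\ell_i \leq m_{i+1} \wedge \cdots \wedge m_k = \tlm_{i+1}$ come directly from the triangular form, and $\ell_i \not\leq \tlm_i$ follows from $\tlm_i \leq m_i$ together with $\ell_i \not\leq m_i$.

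There is no genuine obstacle here; the only subtlety is careful index bookkeeping, namely aligning $c_i := \tlm_{i+1}$ with the witness element $u'_i := \tlm_i$, so that the $k$ independent elements of $W$ end up recovered as a push of a $k$-element chain rather than being off by one.
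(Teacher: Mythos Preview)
Your proposal is correct and follows exactly the route the paper intends: the paper's own proof is simply ``Clear by construction,'' pointing back to the chain $\tlm_1 < \cdots < \tlm_k < T$ built in Theorem~\ref{thm:4.9}, and you have faithfully unpacked that construction, verifying the witness and the two push conditions of Definition~\ref{def:push} with the right index alignment $c_i = \tlm_{i+1}$, $u_i = \tlm_i$.
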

\begin{proof}
Clear by construction.
\end{proof}

% \section{Posets}
\subsection{Lattice completion of finite posets} Given a poset $P := (P,
\leq)$, let \begin{equation}\label{eq:upSet}
 \pdn{P} := \{ \pdn{p} \ds | p \in P \},
\end{equation}  where $\pdn{p}$ is the down-set of
$p$, cf. Definition \ref{def:upsetPos}. Then,
\begin{equation}\label{eq:Hasse}
\Hs{P} := (\pdn{P}, \subseteq) \end{equation} is the \textbf{Hasse
diagram} of $P$, a poset is by itself,  whose partial order is
determined by inclusion.

We define  $\overline{\pdn{P} }$ to be the closure intersection of
all subsets of $\pdn{P}$ including the empty set. Clearly
$\overline{\pdn{P} }$ contains the top element $T = P$. Then,
$\overline{\pdn{P} }$ is the \textbf{Dedekind-MacNeille
completion} of $P$, denoted also as $\DM{P}$, and it is a finite
complete lattice with meet set intersection, top element $P$, and
determined join. Moreover $P$ is order embedded into $\DM{P}$ by
$$\Gm: P \to \DM{P}, \qquad \Gm : p \mapsto \pdn{p}.$$

Similarly, we close all the subsets of $\pdn{P}$ under union
(including the empty set), and denote this \textbf{union closure}
as $\UC{P}$ -- a finite complete lattice. This is a lattice
completion of the poset $P$. The order ideals of $P$ with joint
set union, bottom element $\emptyset$ and determined meet (which
is just set intersection) shows that $\UC{P}$ is a ring set, where
$\Phi: p \mapsto \pdn{p}$ is an order embedding $\Phi: P
\hookrightarrow \UC{P}$, see \cite{Lattice,Gehrke}.

In some reasonable precise sense  $\DM{P}$ is the smallest lattice
completion of the poset  $P$, and $\UC{P}$ is the largest lattice
completion of $P$.

\begin{remark}\label{rmk:DM-Lattice}
The Dedekind-MacNeille completion of a finite lattice  $L :=
(L,\leq)$ is a lattice isomorphic to $L$ (see
\cite{Lattice,Gehrke}).
\end{remark}

\begin{example} Let $P := (P, \leq)$ be the $6$-element poset
$P :=\{a,b,c,d,e,f\}$ whose Hasse diagram  is
 \vskip -1cm  $$\xymatrix{
 & \{ e \}  & \{  f \} &  \\
 \{ a \} \ar@{-}[ur]  & \{b \}  \ar@{-}[u] \ar@{-}[ur]  &  \{ c \} \ar@{-}[u]  \ar@{-}[lu] & \{  d \} \ar@{-}[lu] \\
 } \quad
\begin{array}{llll} \\ \\ \\
  & \pdn{a} & = & \{a \}, \\
&   \pdn{b} & = & \{b \}, \\
\text{with} \quad   & \pdn{c}  & = & \{c \}, \\
 &  \pdn{d} & = & \{d \}, \\
 &  \pdn{e} & = & \{a,b,c,e \}, \\
 &  \pdn{f} & = & \{a,b,d,f \}. \\
\end{array}
 $$
Computing the matrix $A := A(P)$, providing  $\cmp{A}$,  we get
$$
A = \begin{array}{c|cccccc}
      \leq & a & b & c & d & e & f \\\hline
      a & 1 & 0 & 0 & 0 & 1 & 0\\
      b & 0 & 1 & 0 & 0 & 1 & 1 \\
      c & 0 & 0 & 1 & 0 & 1 & 1\\
      d & 0 & 0 & 0 & 1 & 0 & 1\\
      e & 0 & 0 & 0 & 0 & 1 & 0 \\
      f & 0 & 0 & 0 & 0 & 0 & 1 \\

    \end{array} \dss \Rightarrow
\cmp{A} = \begin{array}{c|cccccc}
      \not \leq & a & b & c & d & e & f \\\hline
      a & 0 & 1 & 1 & 1 & 0 & 1  \\
      b & 1 & 0 & 1 & 1 & 0 & 0\\
      c & 1 & 1 & 0 & 1 & 0 & 0\\
      d & 1 & 1 & 1 & 0 & 1 & 0\\
      e & 1 & 1 & 1 & 1 & 0 & 1\\
      f & 1 & 1 & 1 & 1 & 1 & 0 \\
    \end{array}
$$
which shows that $\nk{P} = \rnk{\cmp{A}} = 4$. The
Dedekind-MacNeille completion $\DM{P}$  of $P$  is then
$$ \xymatrix{ & & \{ a,b,c,d,e,f \}    \ar@{-}[dl]  \ar@{-}[dr] & \\
 &  \ul{\st{a, b,c,e}} & & \ul{\st{a, b,d,f}}& \\
 &   & \ar@{-}[ul] \st{a,b} \ar@{-}[ur] & & \\
 \ul{\st{c}} \ar@{-}[uur]  & \ul{ \st{a} }   \ar@{-}[ur]
  &\ul{ \st{b} }  \ar@{-}[u] &\ul{ \st{ d }  \ar@{-}[uu] }  \\
 &   \ar@{-}[ul] \ar@{-}[u]\emptyset \ar@{-}[ur] \ar@{-}[urr] &&  & \\
 }$$
which is a lattice of height $4$, and is an order embedding $P
\hookrightarrow \DM{P}$ of $P$ into $\DM{P}$. (The image of $P$ in
$\DM{P}$ is indicated by underlines.)

\end{example}

\begin{theorem}
Let $\DM{P}$ be the Dedekind-MacNeille completion of a poset $P:=
(P,\leq)$, with the order embedding $\Gm: P \hookrightarrow
\DM{P}$. Abusing notation, we assume that $\Gm$ is the identity
map, i.e., realized as  $P \subseteq \DM{P} $.
\begin{enumerate} \eroman
    \item $\nk{P} = \nk{\DM{P}} = \hgt{P}.$ \pSkip
    \item The independent subsets of $P$ are those of  $\DM{P}$
    restricted to $P$.
\end{enumerate}
\end{theorem}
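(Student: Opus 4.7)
The plan is to establish (ii) first, as it provides the essential dictionary between independence in $P$ and in $\DM{P}$, and then to derive (i) by combining (ii) with Corollary~\ref{cor:4.7} and Theorem~\ref{thm:4.9}.

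For part (ii), the forward direction is immediate: since $\Gm : P \hookrightarrow \DM{P}$ is an order embedding, it preserves both $\leq$ and $\not\leq$, so any witness $U \subseteq P$ of $W \subseteq P$ in $P$ transports unchanged to a witness $\Gm(U)$ of $\Gm(W)$ in $\DM{P}$. For the backward direction, suppose $W = \{\ell_1, \dots, \ell_k\} \subseteq P$ admits a witness $U = \{m_1, \dots, m_k\} \subseteq \DM{P}$, reordered so that $\rwcl{\cmp{A(\DM{P})}}{U}{W}$ is in the triangular form of Lemma~\ref{lem:2.1.f}. Each $m_i$ is a down-set of $P$, and the relation $m_i \leq \pdn{\ell_j}$ inside $\DM{P}$ translates to $m_i \subseteq \pdn{\ell_j}$. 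The diagonal condition $m_i \not\subseteq \pdn{\ell_i}$ yields some $p_i \in m_i$ with $p_i \not\leq \ell_i$ in $P$, while the above-diagonal zeros force $p_i \leq \ell_j$ for every $j > i$. Distinctness of the $p_i$ is automatic: if $p_i = p_j$ with $i < j$, we would have both $p_i \leq \ell_j$ and $p_j \not\leq \ell_j$. Hence $U' := \{p_1, \dots, p_k\} \subseteq P$ is a witness of $W$ in $P$.

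For (i), the inequality $\nk{P} \leq \nk{\DM{P}}$ is immediate from the forward direction of (ii). For the reverse, apply Corollary~\ref{cor:4.7} to select a maximum-cardinality independent subset $X$ of $\DM{P}$ contained in the $\sji$ elements distinct from $B$. The key claim is that every such $\sji \neq B$ element of $\DM{P}$ lies in $\Gm(P)$. Indeed, each $x \in \DM{P}$ is a down-set of $P$; if $x \neq \emptyset$ has maximal elements $\{p_1, \dots, p_n\}$, then $x = \bigcup_i \pdn{p_i}$, and since $x$ itself belongs to $\DM{P}$ this union coincides with the lattice join $\bigvee_i \pdn{p_i}$ in $\DM{P}$. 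Strict join-irreducibility of $x$ then forces $n = 1$, so $x = \pdn{p_1} \in \Gm(P)$. Consequently $X \subseteq \Gm(P)$, and the backward direction of (ii) converts $X$ into an independent subset of $P$ of size $|X| = \nk{\DM{P}}$, giving $\nk{P} \geq \nk{\DM{P}}$. Applying Theorem~\ref{thm:4.9} to the finite lattice $\DM{P}$ finally yields $\nk{P} = \nk{\DM{P}} = \hgt{\DM{P}}$, which is the height quantity appearing on the right-hand side of~(i).

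The principal obstacle is the $\sji$-characterization inside $\DM{P}$, namely arguing that a strictly join-irreducible element distinct from $B$ must have a unique maximum. This hinges on recognizing that the lattice join in $\DM{P}$ of a family of principal down-sets agrees with their set-theoretic union precisely when that union is already Dedekind-MacNeille closed, which is automatic whenever the union is itself a pre-existing element of $\DM{P}$.
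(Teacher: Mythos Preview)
Your argument is correct, and it takes a genuinely different route from the paper's.

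For (ii), the paper passes through the ``pushing-chains'' machinery: starting from a witness $m_1,\dots,m_k$ in $\DM{P}$ it forms the strict chain $\tlm_j := m_j \wedge \cdots \wedge m_k$, realizes $\ell_1,\dots,\ell_k$ as a push of that chain, and then invokes the construction from part (i) to recover a witness inside $P$. Your argument is more direct and more transparent: since every element of $\DM{P}$ is a down-set of $P$, the diagonal condition $m_i \not\subseteq \pdn{\ell_i}$ immediately hands you a point $p_i \in P$ with $p_i \not\leq \ell_i$, and the above-diagonal zeros $m_i \subseteq \pdn{\ell_j}$ for $j>i$ force $p_i \leq \ell_j$. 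This bypasses the chain-and-push step entirely.

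For (i), the paper works constructively from a maximal chain in $\DM{P}$, extracting distinguishing points $q_i$ between consecutive terms and then recursively building a witness $s_1,\dots,s_k$ inside $P$. You instead invoke Corollary~\ref{cor:4.7} to locate a maximum independent set among the $\sji$'s $\neq B$ of $\DM{P}$, observe that every such $\sji$ is a principal down-set and hence lies in $\Gm(P)$, and finish via the backward direction of (ii). Your approach leverages existing results more efficiently; the paper's approach is more hands-on and in the spirit of the proof of Theorem~\ref{thm:4.9}.

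One remark: what you actually establish (and what the paper's own proof computes, since it begins with ``suppose $\hgt{\DM{P}}=k$'') is $\nk{P}=\nk{\DM{P}}=\hgt{\DM{P}}$. The ``$\hgt{P}$'' printed in the statement cannot be meant literally, since for a two-element antichain one has $\nk{P}=2$ while $\hgt{P}=0$; your careful phrasing at the end sidesteps this correctly.
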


\begin{proof} The proof follows the arguments of the proof of Theorem
\ref{thm:4.9}. \pSkip

(i): Suppose $\hgt{\DM{P}} = k$ and let
$$ \emptyset =  P_0 \varsubsetneq P_1 \varsubsetneq \cdots \varsubsetneq P_{k-1 } \varsubsetneq P_k  = P $$
be a maximal chain of $P$. Define
$$ Q_{i} := \bigcap \big \{ \pdn{p} \ds | p \in P_i \big \}, \qquad i=1, \dots, k,$$
to get a maximal  chain
$$ B = Q_k \varsubsetneq  Q_{k-1}   \varsubsetneq  \cdots \varsubsetneq  Q_{1} \varsubsetneq  Q_{0} = T$$
in $\DM{P}.$ For every  $i =1,  \dots k$, pick $$q_i \in Q_{i}
\setminus Q_{i+1}.$$

Pick $s_1 \in Q_1$ so that $s_1 \geq q_1$ and $s_1 \ngeq q_{0}$,
repeat this process recursively, picking $s_i \in Q_i$ such that
$s_i \geq q_i$ and $s_i \ngeq q_{i-1}$. Accordingly, for each
$i=1,\dots n$ we have
$$ s_i \geq q_i, \dots , q_k \dss {\text{and}} s_i \ngeq q_{i-1}.$$
This means that $s_1,\dots,s_k$ provide a witness for the bottom
element, i.e., it is of \nook \ $k$. \pSkip

(ii): Returning to the proof of Proposition \ref{prop:4.10.a},
where if $\ell_1, \dots, \ell_k$ in the lattice $\DM{P}$  are
independent with witness $m_1, \dots, m_k$, then setting $$\tlm_j
= m_j \wedge \cdots \wedge m_k
$$
so that
$$ \tlm_1 < \cdots < \tlm_k < T $$ are an independent subset with
witness $\tlm_1, \dots  , \tlm_k$, and $\ell_1, \dots, \ell_k$ is
a push of this chain.

Then, similar  to the proof  of part (i), when $q_1, \dots, q_k
\in P $ are points of $P$, we can find $\{s_1, \dots, s_k
\}\subseteq P $ that is a witness for  $\{q_1, \dots, q_k\}
\subseteq   P $.
\end{proof}

\subsection{Reformulation of ``pushing-chains'' to obtain
independent subsets of finite posets}\label{ssec:4.4}

Let $\Hs{L} := (\pdn{L}, \subseteq)$ be the Hasse diagram of a
finite lattice $L := (L,\leq)$, cf. \eqref{eq:Hasse}. Assign to
each edge $(\pdn{p_i}, \pdn{p_{i-1}})$ of $\Hs{L}$,  recording the
relation $\pdn{p_i} \subset \pdn{p_{i-1}}$, the set theoretic
difference
$$Q_{i} := \pdn{p_{i-1}} \ds \sm \pdn{p_i}.$$ Then, given a strict maximal chain
$$\pdn{T} \ds = \pdn{p_0} \ds >  \pdn{p_1} \ds > \cdots \ds
>  \pdn{p_{k-1}} \ds  > \pdn{p_k} = B$$ of $L$ from top to
bottom in $L$,   these $Q_i$ are disjoint and their union equals
$L \sm \{ B \}$.

%\begin{remark} In most of our applications the bottom element is
%the empty set, but in some of them is not. Therefore we need to
%distinguish between these cases, and when $B \neq \emptyset$ we
%formally set $Q_k := \{B\}$ to get $L$ as the disjoint union of
%$Q_i$'s.
%\end{remark}

We call the collection
$$ \tQ := Q_1, \ds \dots, Q_{k}$$ a \textbf{partition} of $L$. Note that
 these partitions correspond to different chains of $L$ and thus could have different
 lengths.
 %We say that a partition is \textbf{maximal} if it
 %corresponds to a chain $\gm$ of maximal length in $L$, i.e. $\len{\gm} = \hgt{L}.$

\begin{definition}\label{def:pcs}
A subset $X \subseteq L$ is a \textbf{partial cross section} of a
partition $\tQ$ iff each $ x \in X$ lies in a distinct $Q_i$,
i.e., $|X \cap Q_i| \leq 1$ for each $i = 1, \dots,k.$ (In such a
case, we also say that $X$ is an independent subset of $\tQ$.) A
\textbf{basis} of  a partition $\tQ$ is a partial cross section
$X$ of maximal cardinality.
\end{definition}

\begin{example}
Let $(L, \leq)$ be the finite lattice \eqref{eq:poset} as in
Example \ref{exmp.3.4.7}. Then, computing the Hasse diagram
$\Hs{L}$ and the differences along edges of maximal chains,  we
get
\begin{equation}\label{eq:partition}    {\xymatrix{
 & & T \ar@{-}[dr] \ar@{-}[dl]&
&  & &  & \{ B, 1,2,3, T \} \ar@{-}[dr]^{\{ 1,2, T \}} \ar@{-}[dl]_{\{ 3, T \}} \\
  & 2 \ar@{-}[d] & & 3 \ar@{-}[ddl]
  & \ar@{=>}[r] & & \{ B, 1,2 \}  \ar@{-}[d]_{\{2\}}  & & \{ B ,3 \} \ar@{-}[ddl]^{\{3\}} \\
& 1 \ar@{-}[dr] & &
& & & \{ B, 1 \} \ar@{-}[dr]_{\{ 1 \}}  & &\\
 &  & B & & & & & \{ B  \}
 }}
\end{equation}
where each chain determines a partition of $L \sm \{ B \} $, i.e.
$$ \{ \{1\}, \{ 2\}, \{ 3, T \} \} \dss{\text{and}} \{ \{3 \}, \{
1,2, T \} \} .$$   The bases of these partitions are therefore:
$$ \{ 1,2,  3\}, \quad \{1,  2,  T \}, \quad  \{ 3, T \},   $$
(clearly are not of the same cardinality).
\end{example}

\begin{theorem}\label{thm:pcs}  A subset $X$  of a lattice $L$ is independent iff $X$ is a partial cross section  for
the partition of $L$ corresponding to some set theoretic maximal
chain of $L$ from top to bottom.
% Given a partition $\tQ := Q_0, \dots , Q_k$ of a lattice $L$ and a
% subset $X \subseteq L$, the  partial cross section $X \cap Q_0,
% \dots ,X \cap Q_k$ are the independent subsets of $L$ restricted
% to $X$.
\end{theorem}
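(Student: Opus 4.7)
The plan is to prove both directions by explicit witness constructions, building on the meet-chain argument from the proof of Theorem~\ref{thm:4.9} and Proposition~\ref{prop:4.10.a}.

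For the implication $(\Leftarrow)$, I will fix a maximal chain $T = p_0 > p_1 > \cdots > p_n = B$ with partition $Q_i = \pdn{p_{i-1}} \sm \pdn{p_i}$ and a partial cross section $X = \{x_1, \ldots, x_r\}$, reindexed so that $x_s \in Q_{i_s}$ with $i_1 > i_2 > \cdots > i_r$. The proposed witness is $U := \{m_1, \ldots, m_r\}$ with $m_s := p_{i_s}$. The diagonal relation $x_s \not\leq m_s$ is immediate from $x_s \notin \pdn{p_{i_s}}$. For $s < t$, the chain order yields $i_s - 1 \geq i_t$, so $x_s \leq p_{i_s - 1} \leq p_{i_t} = m_t$. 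These two relations place $\rwcl{\cmp{A}}{U}{X}$ into the triangular form \eqref{eq:trgform} after an appropriate permutation, so Lemma~\ref{lem:2.1.f} yields nonsingularity and establishes that $X$ is independent.

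For the converse $(\Rightarrow)$, I will start with $X = \{x_1, \ldots, x_r\}$ independent and fix a witness $U = \{m_1, \ldots, m_r\}$ ordered so that $x_s \not\leq m_s$ and $x_s \leq m_t$ whenever $s < t$. Following the proof of Theorem~\ref{thm:4.9}, I define $\tlm_j := m_j \wedge m_{j+1} \wedge \cdots \wedge m_r$ for $j = 1, \ldots, r$ and set $\tlm_{r+1} := T$. The key claim is that $T = \tlm_{r+1} > \tlm_r > \cdots > \tlm_1$ is a \emph{strict} descending chain: the witness relations $x_s \leq m_t$ for every $t > s$ yield $x_s \leq \tlm_{s+1}$, while $\tlm_s \leq m_s$ combined with $x_s \not\leq m_s$ forces $x_s \not\leq \tlm_s$. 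Hence $x_s \in \pdn{\tlm_{s+1}} \sm \pdn{\tlm_s}$, placing distinct elements of $X$ in distinct partition elements of this chain.

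The last step is to extend $T > \tlm_r > \cdots > \tlm_1$ to a maximal chain of $L$ by adjoining a chain from $\tlm_1$ down to $B$ and refining any non-covering step with intermediate elements. Each such refinement merely subdivides an existing partition element, so $X$, which hits each coarse element at most once, still hits each refined element at most once and remains a partial cross section for the induced partition. I expect the main delicate point to be verifying strictness of the meet-chain in the $(\Rightarrow)$ direction: it is the full witness condition $x_s \leq m_t$ for $t > s$ (not just the diagonal relation) that guarantees $x_s$ separates $\tlm_s$ from $\tlm_{s+1}$ and thereby forces the strict inequality $\tlm_s < \tlm_{s+1}$.
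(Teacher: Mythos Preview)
Your proposal is correct and follows essentially the same approach as the paper: the paper's proof is a one-line reference to Propositions~\ref{prop:4.10.b} and~\ref{prop:4.10.a} (``pushing chains''), and you have simply unfolded those references explicitly, using the meet-chain $\tlm_j = m_j \wedge \cdots \wedge m_r$ from the proof of Theorem~\ref{thm:4.9} for the forward direction and the direct witness construction (i.e., a push of a chain) for the backward direction. Your added detail about extending to a \emph{maximal} chain via refinement, and checking that refinement only subdivides partition blocks and hence preserves the partial-cross-section property, is a point the paper leaves implicit but which is exactly what is needed to finish the argument.
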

\begin{proof}
  The proof is just a reformulation of
``pushing chains" argument for lattices,  Propositions
\ref{prop:4.10.b} and~ \ref{prop:4.10.a}.
 \end{proof}

%\begin{corollary}
%Maximal independent subsets of $L$ correspond to cross sections of
%maximal partitions.
%\end{corollary}

\begin{iremark}
The same construction defined above works for finite posets  $P :=
(P,\leq)$ by considering the independent partitions of the
Dedekind-MacNeille completion $\DM{P}$. Then,  restricting these
partitions to $P$ and taking the partial cross sections give the
independent subsets of $P$. The proof is the same as before.
\end{iremark}

\section{Hereditary collections} Recall Definitions 2.1 and 2.5,
and the basic terminology,
 of \cite{IJmat}:

\begin{definition}\label{def:hereditary}
Let $E$  be a  set  and let $\tH \subseteq \Pow(E)$ be an nonempty
 collection of subsets $J$ of~$E$. The nonempty collection
$\tH$ is called \textbf{hereditary} if every subset $J'$ of any $J
\in \tH$ is also in $\tH$, more precisely: \boxtext{
\begin{enumerate} \eroman
    \item[HT1:] \ $\tH$ is nonempty, \pSkip

    \item[HT2:]  \ $J' \subseteq J$, $J \in \tH \ \imp \ J' \in
    \tH$.
\end{enumerate}}
(Hence, the empty set $\emptyset$ is also in $\tH$.)
The pair $\H := (E,\tH)$, with $\tH$ hereditary over $E$, is
called a \textbf{hereditary collection}.
\end{definition}

A subset $J \in \tH$ is called \textbf{independent}; otherwise is
said to be \textbf{dependent}.  A minimal dependent subset (with
respect to inclusion)  of $E$ is called a \textbf{circuit}. A
single element $x \in E$ that forms a circuit of $\H := (E, \tH)$,
or equivalently it belongs to no basis, is called a \textbf{loop}.
Two elements $x$ and $y$ of $E$ are said to be \textbf{parallel},
written $x \prll y$, if the $2$-set $\{x, y \}$ is a circuit of
$\H$. A hereditary collection is called \textbf{simple} if it has
no circuits consisting of $1$ or $2$ elements, i.e.,  has no loops
and no parallel elements.

\begin{definition} We say that  $\H = (E,\tH)$ satisfies the \textbf{point
replacement property} iff
 \boxtext{
\begin{enumerate} \eroman
    \item[PR:]  For every  $\{ p\}  \in \tH$ and every nonempty subset $J \in \tH$ there exists
    $x \in J$ such that $J - x  + p \in \tH$.
\end{enumerate}}
\end{definition}

Given a hereditary collection $\H = (E,\tH)$ that  satisfies PR,
then $\H$ is simple iff all of its subsets of $2$ or less element
are independent. The proof is the same as for the matroid case.

\begin{theorem}[{\cite[Theorem 5.3]{IJmat}}]
A vector hereditary collection \cite[Definition 4.3]{IJmat}
determined by the columns of a boolean matrix satisfies the point
replacement property.
\end{theorem}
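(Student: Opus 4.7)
The plan is to exploit the triangular form of nonsingular submatrices provided by Lemma~\ref{lem:2.1.f} and to examine how the column $p$ sits relative to a fixed witness for $J$. Given a nonempty $J = \{v_1, \dots, v_k\} \in \tH$, by Theorem~\ref{thm:rnkSing} together with Lemma~\ref{lem:2.1.f} I would fix rows $U = \{u_1, \dots, u_k\}$ and an ordering so that the witness $\rwcl{A}{U}{J}$ is triangular: $A(u_i, v_i) = 1$ for every $i$, $A(u_i, v_j) = 0$ for $i < j$, and all entries below the diagonal lie in $\{0,1\}$ since the ambient matrix $A$ is boolean. I would then inspect the vector of entries $A(u_1, p), \dots, A(u_k, p)$ of the new column restricted to the witness rows and split into two cases.

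Suppose first that $A(u_i, p) = 1$ for some $i$, and let $i$ be the smallest such index. Because the entries of $A$ lie in $\{0,1\}$, the minimality of $i$ forces $A(u_j, p) = 0$ for every $j < i$. I would then consider $\rwcl{A}{U}{J - v_i + p}$ with its columns reordered as $v_1, \dots, v_{i-1}, p, v_{i+1}, \dots, v_k$. This $k \times k$ submatrix is again in triangular form: the new diagonal entry at position $i$ is $A(u_i, p) = 1$, every other diagonal entry is unchanged, the above-diagonal entries of the column $p$ vanish by the minimality of $i$, and the above-diagonal entries of the unchanged columns vanish by the original triangularity. Hence this submatrix is nonsingular and $J - v_i + p \in \tH$.

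In the opposite case, $A(u_j, p) = 0$ for every $j = 1, \dots, k$. Since $\{p\} \in \tH$, the column $p$ does not belong to $\tGz^{(n)}$, so it has a $1$-entry at some row $u_0$, which by the case assumption satisfies $u_0 \notin U$. I would then augment the witness to $\rwcl{A}{U + u_0}{J + p}$ with rows ordered $u_1, \dots, u_k, u_0$ and columns $v_1, \dots, v_k, p$. The resulting $(k+1) \times (k+1)$ matrix is again triangular: its corner diagonal entry is $A(u_0, p) = 1$, every other diagonal entry is unchanged, and the new above-diagonal column $p$ is identically zero on $u_1, \dots, u_k$ by the case assumption. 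Thus $J + p \in \tH$, and property HT2 yields $J - x + p \in \tH$ for every choice of $x \in J$.

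The only real obstacle is checking carefully that the reassembled witness stays triangular after the column exchange or the row augmentation, and both checks reduce to reading off the zero pattern imposed by the case assumption. The reason the argument is restricted to boolean matrices is precisely this: in the superboolean setting a $1^\nu$-entry of $p$ at some row $u_j$ with $j < i$ would destroy the above-diagonal zero pattern in Case~1, and likewise spoil the augmentation in Case~2, so the ``smallest $i$'' trick would have to be replaced by a more delicate argument.
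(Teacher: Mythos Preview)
The paper does not supply its own proof of this theorem: it is imported verbatim from \cite[Theorem~5.3]{IJmat} and stated without argument, so there is nothing in the present paper to compare against. That said, your proposal is a correct and self-contained proof using only the tools available here (Lemma~\ref{lem:2.1.f}, Theorem~\ref{thm:rnkSing}, and HT2). The two-case split on whether $p$ has a $1$-entry along the witness rows is exactly the right idea: in Case~1 the minimal-index trick recovers a triangular witness for $J - v_i + p$, and in Case~2 the extra row $u_0$ enlarges the witness to one for $J + p$, after which heredity finishes. Your closing remark about why the argument needs boolean (not superboolean) entries is also on point: a ghost entry $1^\nu$ above the chosen diagonal position would wreck the triangular form, which is precisely where the hypothesis that $A$ is boolean is consumed.
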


%\subsection{First applications of boolean representations}
%******************************* Reference *********************************

%Recalling Definition \ref{def:hereditary} of finite hereditary
%collection and the point replacement property, denoted  PR, for
% short.

\begin{theorem}\label{thm:5.3}
If a simple hereditary collection $\H := (E, \tH)$  has a boolean
representation, then there exist partitions $\tQ_1, \dots,
\tQ_\ell$ of $E$ so that the members of  $\tH$ are the  partial
cross sections.
\end{theorem}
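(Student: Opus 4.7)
The plan is to exhibit the desired partitions directly from the boolean representation, using one partition per maximal chain of the ambient boolean lattice $\bool^n$. Let $A$ be an $n \times |E|$ boolean matrix representing $\H$, so that $W \in \tH$ exactly when the columns $c_e := \cl{A}{\{e\}}$ for $e \in W$ are independent in the sense of Definition \ref{def:tropicDep}. Simplicity of $\H$ guarantees that every $c_e$ is nonzero (no loops) and that distinct $e$'s have distinct columns (no parallel elements).

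For each permutation $\pi$ of the row index set $\{1, \dots, n\}$, corresponding to a maximal chain of $\bool^n$ in the sense of \S\ref{ssec:4.4}, I would define a partition $\tQ_\pi = (Q^1_\pi, \dots, Q^n_\pi)$ of $E$ by placing $e$ in the block
$$ Q^i_\pi \ds := \big\{ e \in E \ds : (c_e)_{\pi(i)} = 1 \text{ and } (c_e)_{\pi(j)} = 0 \text{ for all } j < i \big\}, $$
i.e., the block indexed by the first position (in the order dictated by $\pi$) at which $c_e$ has a $1$. Nonvanishing of $c_e$ places each $e$ in exactly one block. The resulting (at most) $n!$ partitions form the proposed finite list $\tQ_1, \dots, \tQ_\ell$.

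The content of the theorem is the equivalence $W \in \tH$ iff $W$ is a partial cross section of some $\tQ_\pi$. For the ``partial cross section implies independent'' direction, suppose $W = \{e_1, \dots, e_k\}$ with $e_j \in Q^{i_j}_\pi$ and $i_1 < \cdots < i_k$. The $k \times k$ submatrix of $A$ on rows $\pi(i_1), \dots, \pi(i_k)$ and columns $e_1, \dots, e_k$ has a $1$ in entry $(j,j)$ and a $0$ in entry $(j,l)$ whenever $l>j$, since $i_j < i_l$ forces $(c_{e_l})_{\pi(i_j)} = 0$ by the ``first-hit'' definition of $Q^{i_l}_\pi$. This is precisely the triangular form of Lemma \ref{lem:2.1.f}, so the submatrix is nonsingular and $W \in \tH$.

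Conversely, if $W \in \tH$ then Theorem \ref{thm:rnkSing} together with Lemma \ref{lem:2.1.f} furnishes rows $r_1, \dots, r_k$ and an ordering $W = \{e_1, \dots, e_k\}$ for which the induced $k \times k$ submatrix of $\cl{A}{W}$ is triangular with $1$'s on the diagonal and $0$'s above. Extending $r_1, \dots, r_k$ to any permutation $\pi$ of $\{1,\dots,n\}$ with $\pi(j) = r_j$ for $j \le k$, triangularity reads as $(c_{e_j})_{\pi(j)} = 1$ and $(c_{e_j})_{\pi(i)} = 0$ for all $i < j$, i.e., $e_j \in Q^j_\pi$, so $W$ is a partial cross section of $\tQ_\pi$. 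The main subtlety, and essentially the only thing to get right, is aligning the ``zeros-above-the-diagonal'' convention of Lemma \ref{lem:2.1.f} with the ``smallest-hit-index'' convention in the definition of $Q^i_\pi$; once the orderings of rows and of $W$ are fixed consistently, both directions are immediate translations.
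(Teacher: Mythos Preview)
Your argument is correct. The two directions are exactly as you say: a partial cross section of some $\tQ_\pi$ yields a lower-triangular witness submatrix with $1$'s on the diagonal, while conversely a nonsingular $k\times k$ submatrix, put in the triangular form of Lemma~\ref{lem:2.1.f}, tells you the first-hit positions of the $k$ columns under the permutation that begins with those $k$ rows. Simplicity enters only to guarantee that every column is nonzero, so that the $Q^i_\pi$ really partition $E$.

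The paper takes a different route. It does not prove Theorem~\ref{thm:5.3} directly but subsumes it in Theorem~\ref{thm:5.4}: starting from $A$, it first adjoins all ``single-zero'' rows and then closes under row-sups to obtain a matrix $A'$; the rows of $A'$ form a lattice $\Lat{A'}$, anti-isomorphic to the lattice $\clos(A')$ of zero-sets. The partitions are then those of \S\ref{ssec:4.4} coming from maximal chains of $\clos(A')$, and the identification of $\tH$ with partial cross sections is made via the pushing-chains machinery of Propositions~\ref{prop:4.10.b} and~\ref{prop:4.10.a}. Your approach bypasses this lattice construction entirely by working in the ambient $\bool^n$: maximal chains there are just permutations of the row indices, and the partition blocks are the ``first-hit'' sets. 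This is more elementary and gives a concrete (if redundant, up to $n!$-fold) list of partitions; the paper's construction, on the other hand, produces a canonical lattice attached to the representation and ties the result into the general theory of \S\ref{ssec:4.4}, at the cost of the extra augmentation-and-closure step.
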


%\begin{proof}
%The proof is obtained using section 1 and ``partition forcing".
% \end{proof}

The statement of Theorem \ref{thm:5.3} can be strengthen to
Theorem \ref{thm:5.4}, basing on the construction as described
next.

Let $\H := (E, \tH)$  be a simple hereditary collection, and
assume it has  a boolean representation  $A := A(\H)$. Augment the
rows of $A$ by all possible rows having exactly one entry $0$ and
the others $1$; call this matrix $B$. Then augment the enlarged
matrix $B$ again by adding the sups of all possible row subsets,
and denote this new matrix by $A'$.

Define the ``closed sets'' $C : = \clos(A')$ of $A'$ by taking the
collection of row-subsets of $A'$ whose members have a $0$-entry
in $r$,  for each row $r$  of $A'$. (Denote such a row as
$r^{(0)}_j.$) Then $\clos(A')$ is closed under all intersections
(so it includes $E$ and the empty set $\emptyset$) and is also
given by closing $\clos(A)$ under all intersections. Thus, $C$ is
a lattice with meet intersection and determined join being $\clos(
X \cup Y )$, where closure of   $Z$,  a subset of subset $E$, is
the intersection of all members of $C$ containing $Z$.
\begin{theorem}\label{thm:5.4} Let $A'$ be a as constructed above for a simple hereditary collection $\H : = (E, \tH).$
\begin{enumerate} \ealph
    \item The rows of $A'$ from  a lattice $L' : = \Lat{A'}$,  under sup and determined
join, which is sup-generated by the rows of $A'$. \pSkip
\item The independent subsets of $A$ and $A'$ are the same  by  Lemma \ref{lem:4.5}.
\pSkip
\item
   The map given by  $r_i \mapsto r_i^{(0)}$ is a reverse
isomorphism of $\Lat{A'}$ and $\clos(A')$. \pSkip
  \item  The partial cross sections of the partitions of $\clos (A')$  give exactly $\tH$.

\end{enumerate}

\end{theorem}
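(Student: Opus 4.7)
My plan is to take the four parts in turn, with (a) and (c) essentially bookkeeping and the substance concentrated in (b) and (d). For (a), I would observe that by construction $A'$ is closed under sups of arbitrary row subsets of $B$, so the rows form a finite join-subsemilattice of the boolean module $\bool^{|E|}$. Finiteness then allows one to define a meet $r \wedge s$ as the sup of all rows lying below both, so that $\Lat{A'}$ becomes a finite lattice sup-generated by its own rows. For (c), the map $r \mapsto r^{(0)} := \{c \in E \ds | r_c = 0\}$ is injective because a boolean vector is determined by its $0$-set; it is order-reversing since $r \leq s$ componentwise iff $r^{(0)} \supseteq s^{(0)}$; and it converts sups to intersections via $(r \vee s)^{(0)} = r^{(0)} \cap s^{(0)}$. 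Since $\clos(A')$ is closed under intersections by its very definition, this yields the reverse lattice isomorphism $\Lat{A'} \cong \clos(A')^{\operatorname{op}}$.

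For (b), I would follow the hint and apply Lemma~\ref{lem:4.5}. The inclusion ``independent in $A$ implies independent in $A'$'' is immediate. For the reverse, let $J = \{c_1, \dots, c_k\}$ have a triangular witness $r_1, \dots, r_k$ in $A'$. If some $r_i$ is a newly introduced sup row $r_i = s_1 \vee s_2$, then $r_i$'s diagonal $1$ at $c_i$ comes from one of the components (say $s_1$), while both $s_1,s_2$ inherit $0$'s at the positions $c_{i+1},\dots,c_k$ above the diagonal; so $s_1$ can replace $r_i$ in the witness, which is exactly the replacement step of Lemma~\ref{lem:4.5}. If $r_i$ is one of the single-$0$ rows introduced in passing from $A$ to $B$, it can only sit at the very bottom of the triangular witness (it has too few $0$'s to serve elsewhere), and simplicity of $\H$ guarantees that some original row of $A$ has a $1$ at $c_k$, which may take the place of $r_i$. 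Iterating drives the witness entirely inside $A$. The main obstacle of the whole proof lives here: while the sup-row replacement is pure Lemma~\ref{lem:4.5}, the single-$0$ rows require the extra use of simplicity of $\H$ to relocate a replacement row inside $A$.

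For (d), I would combine (b) and (c) with Theorem~\ref{thm:pcs} and the chain-to-partition procedure of \S\ref{ssec:4.4}. Given $J = \{c_1,\dots,c_k\}$ independent in $A'$ with triangular witness $r_1,\dots,r_k$, set $Q_j := r_1^{(0)} \cap \cdots \cap r_{j-1}^{(0)}$ with $Q_1 := E$. Then (c) shows that each $Q_j$ lies in $\clos(A')$, and the triangular conditions $A'_{r_i,c_i} = 1$ together with $A'_{r_i,c_j} = 0$ for $i < j$ read as $c_i \notin r_i^{(0)}$ and $c_j \in r_i^{(0)}$ for $i < j$, which is exactly $c_j \in Q_j \setminus Q_{j+1}$. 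Extending this strictly descending chain to a maximal one from $E$ to $\emptyset$ in $\clos(A')$, the $c_j$ land in pairwise distinct parts of the associated partition of $E$, so $J$ is a partial cross section. Conversely, any partial cross section of such a partition yields, by reading off the chain, a triangular witness in $A'$; part (b) then returns this to a witness in $A$, recovering $J \in \tH$.
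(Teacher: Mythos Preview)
Your plan is correct and matches the paper's route: (a) and (c) are declared clear, (b) is reduced to Lemma~\ref{lem:4.5}, and (d) is the chain--partition--partial-cross-section translation of \S\ref{ssec:4.4} and Theorem~\ref{thm:pcs}; you in fact give more detail than the paper, which says only ``(b) is obtained by Lemma~\ref{lem:4.5}'' without isolating the single-$0$-row case. One small slip in your (b): a single-$0$ row can also occupy position $k-1$ of the triangular witness (its unique $0$ then lying at $c_k$), and replacing it requires a row of $A$ with $0$ at $c_k$ and $1$ at $c_{k-1}$, which comes from the no-parallels half of simplicity rather than the no-loops half --- but your appeal to simplicity already carries this.
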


\begin{proof}  (a) and (c) are clear, while (b) is obtained by  Lemma
\ref{lem:4.5}. \pSkip

(d):  Consider a set theoretic maximal chain
       $$\emptyset = C_k  \ds < C_{k - 1} \ds < \cdots \ds  < C_1 \ds <  C_0 =E$$
from  $\emptyset$ to $E$ in $\clos(A')$.
 Since each $C_j$
corresponds a row $r^{(0)}_j$, replacing $C_j$  by the
corresponding row $r^{(0)}_j$ and reversing the order of the chain
we obtain the following chain in $\Lat{A'}$:
\begin{equation}\label{eqq} [0 \cdots 0] =  r^{(0)}_0 < \cdots < r^{(0)}_j      < \cdots  <
r^{(0)}_k = [1 \cdots 1], \qquad k \leq n,
\end{equation} where $r^{(0)}_0$ is a row whose entries are all $0$ and
$r^{(0)}_k$ is a row whose entries are all $1$.

We number the element of $E$ as $e_i$, where $ i = 1,2,\dots, n$
and $n = |E|$.  By induction we can assume that each $r^{(0)}_j$
has all of its $1$-entries first on the left and then all
$1$-entries. Let each $r^{(0)}_j$ have its $1$-entries up to $i_j$
in $E$, and consider the partition $$\tQ :=  \{ 1,\dots, i_1 \} ,
\{ i_1 +  1 ,  \dots, i_2\}, \ds \dots, \{ i_{k-1}+1,\dots, i_k\},
\qquad i_k = n,
$$ of $E$. By Proposition \ref{prop:4.10.b}, Proposition
\ref{prop:4.10.a}, and \S\ref{ssec:4.4},
 we see that the partial cross section of  $\tQ$ are just the
pushes of the chain \eqref{eqq}. This proves (d).
\end{proof}

\begin{proposition}
Not any hereditary collection that satisfies PR (even if it turns
out to be isomorphic to its dual) has a boolean representation.
\end{proposition}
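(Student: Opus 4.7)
The plan is to exhibit a specific counterexample: a hereditary collection $\H = (E, \tH)$ that satisfies PR and is isomorphic to its dual, yet admits no boolean representation. The obstruction will be read off from Theorems~\ref{thm:5.3} and~\ref{thm:5.4}, which assert that the members of a simple boolean-representable hereditary collection are exactly the partial cross sections of the partitions of $E$ derived from maximal chains of the closed-set lattice $\clos(A')$ of an augmented representing matrix.

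First I would choose a small matroid $M$ as the candidate for $\H$. Since every matroid satisfies the exchange axiom, PR is automatic. Self-duality is then checked by producing an explicit involution of the ground set sending bases of $M$ to bases of $M^*$ (equivalently, sending each basis to the complement of a basis); a convenient testing ground is a self-dual paving or uniform matroid whose lattice of flats is already known to resist boolean representability.

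The core of the argument is the non-existence of a boolean representation. I would assume, for contradiction, that some boolean matrix $A$ represents $\H$, form the augmented matrix $A'$ and the closure operator $\clos(A')$ as in the proof of Theorem~\ref{thm:5.4}, and extract the partition system $\tQ_1, \dots, \tQ_\ell$ coming from maximal chains of $(\clos(A'), \subseteq)$. By Theorem~\ref{thm:5.4}(d) this forces $\tH$ to coincide with the family of partial cross sections of the $\tQ_i$. I would then derive a contradiction by exhibiting either a member of $\tH$ that fails to be a partial cross section of any $\tQ_i$, or a partial cross section that is not a member of $\tH$.

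The main obstacle is that one must rule out \emph{all} possible boolean matrices $A$, since the number of rows is a priori unbounded. The way past this will be to use the lattice-theoretic reformulation of \S\ref{ssec:4.2} and \S\ref{ssec:4.4}, which shows that the obstruction depends only on the closure-lattice of any putative representation; this reduces the infinite search over $A$ to a finite combinatorial check on the lattice of flats of the chosen $M$. The self-duality hypothesis ensures the obstruction is symmetric under dualization, so the same pathology survives replacing $\H$ by $\H^*$.
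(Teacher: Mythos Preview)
Your plan has a genuine gap at the very first step: committing to a matroid as the counterexample. In this framework, matroids \emph{do} admit boolean representations --- that is precisely the content of the companion papers \cite{IJmat,IJmatII} to which this paper refers --- so no matroid (paving, uniform, self-dual or otherwise) can witness the proposition. The paper's example on $E=\{1,2,3,4,5\}$ with bases $\{1,2,3\},\{1,2,4\},\{2,3,5\},\{1,4,5\},\{3,4,5\}$ is deliberately \emph{not} a matroid: it satisfies PR, but basis exchange fails (take $B_1=\{1,2,3\}$, $B_4=\{1,4,5\}$, and try to exchange $2\in B_1\setminus B_4$; neither $\{1,3,4\}$ nor $\{1,3,5\}$ is a basis). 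The ``duality'' invoked is also not matroid duality (which would change the rank on a $5$-set) but the notion of \cite[Definition~2.15]{IJmat}.

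Your proposed machinery is also heavier than needed, and the step ``reduce the search over all $A$ to a finite check on the lattice of flats of $M$'' conflates two different lattices: the closed-set lattice $\clos(A')$ depends on the representation $A$, not only on $\H$, so it is not the lattice of flats of $\H$. The paper sidesteps this entirely. By Theorem~\ref{thm:5.3}, any boolean representation yields partitions of $E$ whose partial cross sections are exactly the members of $\tH$; in particular, some single partition $\tQ=(Q_1,Q_2,Q_3)$ must have $\{1,2,3\}$ as a cross section. One then argues element-by-element: placing $4$ in $Q_1$ or $Q_2$ would make the non-member $\{2,3,4\}$ or $\{1,3,4\}$ a cross section, so $4\in Q_3$; similarly $5$ is forced into $Q_1$. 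The resulting partition $\{1,5\},\{2\},\{3,4\}$ then has $\{2,4,5\}$ as a cross section, but $\{2,4,5\}\notin\tH$ --- contradiction. No lattice computation is required; the obstruction lives purely at the level of a single forced partition.
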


\begin{proof}
For example consider the hereditary collection $\H: = (E,\tH)$
with $E = \{1, 2, 3,4,5 \} $ whose bases are
\begin{equation}\label{eq:basses}B_1 : =  \{1, 2, 3\}, \quad B_2 : = \{1,
2,4\}, \quad B_3 : =  \{ 2, 3,5\}, \quad B_4 : =  \{1,4,5\}, \quad
B_5 : =  \{3,4,5\}.
\end{equation}
It is easy to check that $\H$ satisfies PR and  is isomorphic to
its dual $\H^*$ (cf. \cite[Definition 2.15]{IJmat}) whose bases
are the 3-subsets of $E$ excluding the bases of $\H.$

Since $B_1$ is a basis, $E$ has a partition $\tQ = Q_1, Q_2, Q_3 $
with $i \in Q_i$, with $4$ and $5$ belong to these subsets
$Q_i$'s. Since the bases are as given in \eqref{eq:basses}, we are
``enforced'' to have the partition \begin{equation}\label{eq:par}
Q_1 : =  \{1, 5\}, \quad Q_2 : = \{2\}, \quad Q_3 : =  \{ 3,4\}.
\end{equation}
But then, by Theorem \ref{thm:5.3},  $\{2,4,5 \}$ which is not a
basis is also independent  -- a contradiction. This means that
$\tH$ can not have a boolean representation, since it must then be
given be partial cross section of the partition, in particular
$\{1,2,3 \} $ must also be given in this  way for which only
\eqref{eq:par} works.
\end{proof}

%\subsection{First applications of boolean representations}
%******************************* Reference *********************************

%******************************* Reference *********************************
%\bibliographystyle{abbrv}
%\bibliography{../../../bib/dfz}

\end{document}